\newtheorem{theorem}{Theorem}[section]
\newtheorem{lemma}[theorem]{Lemma}
\newtheorem{remark}[theorem]{Remark}
\newenvironment{proof}[1][Proof]{\noindent\textbf{#1.} }{\ \rule{0.5em}{0.5em}}
\newcommand{\B}[1]{\boldsymbol{#1}}
\begin{document}

\bigskip

\bigskip
\begin{frontmatter}

\title{A new quadratic and cubic polynomial enrichment of the Crouzeix--Raviart finite element.}
\author[address-CS,address-CNR]{Francesco Dell'Accio\corref{corrauthor}}
\cortext[corrauthor]{Corresponding author}
 \ead{francesco.dellaccio@unical.it}

\author[address-Pau]{Allal Guessab}
\ead{allal.guessab@univ-pau.fr}
  \author[address-CS,address-Pau]{Federico Nudo}
  \ead{federico.nudo@unical.it}

  \address[address-CS]{Department of Mathematics and Computer Science, University of Calabria, Rende (CS), Italy}
  \address[address-Pau]{Laboratoire de Mathématiques et de leurs Applications, UMR CNRS 5142, Université de Pau et des Pays de l'Adour (UPPA), 64000 Pau, France}

  \address[address-CNR]{Istituto per le Applicazioni del Calcolo “Mauro Picone”, Naples Branch, C.N.R. National Research Council of Italy, Napoli, Italy}

\begin{abstract}
In this paper, we introduce quadratic and cubic polynomial enrichments of the classical Crouzeix--Raviart finite element, with the aim of constructing accurate approximations in such enriched elements. To achieve this goal,
we respectively add three and seven weighted line integrals as enriched degrees of freedom. For each case, we present a necessary and sufficient condition under which
these augmented elements are well-defined. For illustration purposes, we then use a general approach to define two-parameter  
 families of admissible degrees of freedom.
 Additionally, we provide
explicit expressions for the associated basis functions and subsequently introduce new quadratic and
cubic approximation operators based on the proposed admissible elements.
The efficiency of the enriched methods is compared to the triangular Crouzeix--Raviart element. As expected, the numerical results exhibit a significant improvement,
confirming the effectiveness of
the developed enrichment strategy.
\end{abstract}

\begin{keyword}
 Crouzeix--Raviart element\sep enrichment functions\sep triangular linear element  
\end{keyword}

\end{frontmatter}

\section{Introduction}
The analysis of various physical phenomena often requires the widespread application of the finite element method. This method is extensively used to approximate solutions of partial differential equations (PDEs), where the problem domain is divided into subdomains known as \textit{finite elements}. Each finite element corresponds to a local solution of the differential problem, and the collective contribution of these local solutions constructs a global piecewise solution. A finite element is classified as \textit{conforming} if the global piecewise solution remains continuous across subdomain boundaries; otherwise, it is classified as \textit{nonconforming}. To enhance the local approximation accuracy generated by the finite element method, a commonly employed technique is to augment the finite element with \textit{enrichment elements}~\cite{Guessab:2022:SAB, Guessab:2016:AADM, Guessab:2016:RM, DellAccio:2022:QFA, DellAccio:2022:AUE, DellAccio:2022:ESF, DellAccio2023AGC, DellAccio2023nuovo}. The Crouzeix--Raviart finite element, named after mathematicians Vivette Crouzeix and Pierre-Arnaud Raviart, is a particular finite element employed in numerical analysis to solve second-order elliptic PDEs. Its notable characteristic is its nonconforming nature, offering flexibility in handling complex geometries and irregular meshes~\cite{CR1973:2022:CR}. In the Crouzeix--Raviart finite element method, the approximation spaces are chosen to efficiently represent discontinuous solutions. This feature makes it well-suited for problems with solutions featuring singularities or discontinuities. The Crouzeix--Raviart finite element has found applications in various fields, including fluid dynamics, structural mechanics, and other areas where accurate and flexible numerical solutions of PDEs are crucial~\cite{chatzipantelidis1999finite, hansbo2003discontinuous, burman2005stabilized, zhu2014analysis, di2015extension, ve2019quasi}.

In this paper, we introduce two enrichments of the classical Crouzeix--Raviart finite element based on quadratic and cubic polynomials. These enrichments are realized by employing weighted line integrals as enriched degrees of freedom. 
The paper is organized as follows. In Section~\ref{s1}, we introduce a new enrichment of the classical Crouzeix--Raviart finite element using quadratic polynomial functions. Additionally, we derive an explicit expression for the associated basis functions and then introduce a new quadratic approximation operator based on this enriched finite element. In line with the previous section, in Section~\ref{sec3} we propose a cubic polynomial enrichment of the Crouzeix--Raviart finite element, and we introduce a new cubic approximation operator. Numerical results are provided in Section~\ref{sec4}.

\section{A quadratic polynomial enrichment of the Crouzeix--Raviart finite element}\label{s1}
Let $T \subset \mathbb{R}^2$ be a nondegenerate triangle with vertices $\B{v}_1, \B{v}_2, \B{v}_3$. For $i=1,2,3$, we denote by $\lambda_i$ the barycentric coordinate associated to the vertex $\B{v}_i$, and by $\Gamma_i$ the edge of $T$ opposite to the vertex $\B{v}_i$. These functions  are affine functions and satisfy the following properties
\begin{equation}\label{bary}
\lambda_i(\B{v}_j) = \delta_{ij}:=
\begin{cases}
\begin{aligned}
    &1, \quad \text{if } i=j, \\
    &0, \quad \text{if } i\neq j,
\end{aligned}
\end{cases}
\quad i,j=1,2,3,
\end{equation}
\begin{equation*}
\sum_{i=1}^3 \lambda_i(\B x) = 1, \qquad \B x\in T
\end{equation*}
and
\begin{equation}\label{prop2}
\lambda_i(t \B{x} + (1-t) \B{y}) = t \lambda_i(\B{x}) + (1-t)\lambda_i(\B{y}), \qquad \B x, \B y\in T, \qquad t\in[0,1].
\end{equation}
A direct consequence of~\eqref{bary} and~\eqref{prop2} is 
\begin{equation}\label{propstar}
  \lambda_i(\B x)=0, \quad \B x\in \Gamma_i, \quad i=1,2,3.  
\end{equation}
In the subsequent discussion, we will leverage a classical result that is applicable to any $d$-simplex $T_d$, where $d\in\mathbb{N}$~\cite[Chapter 2]{Dunkl:2014:OPO}.
\begin{lemma}\label{lem1old}
Let $\gamma_0,\ldots, \gamma_d$ be nonnegative real numbers. Then the following identity holds
\begin{equation}\label{idbc}
\frac{1}{\left\lvert T_d\right\rvert}\int_{T_d}\prod_{i=0}^{d}\lambda_i^{\gamma_i}(\B{x})\, d\B{x} = \frac{d! \prod_{i=0}^{d}\Gamma(\gamma_i+1) }{\Gamma(d+1+\sum_{i=0}^{d}\gamma_i)},
\end{equation}
where $\left\lvert T_d\right\rvert$ is the volume of $T_d$ and $\Gamma(z)$ is the gamma function~\cite{Abramowitz:1948:HOM}.
\end{lemma}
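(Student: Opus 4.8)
The plan is to prove the Dirichlet integral formula over a simplex by reducing it to the one-dimensional Beta function identity. The key observation is that the barycentric coordinates $\lambda_0,\ldots,\lambda_d$ provide a natural affine parametrization of the simplex, and under this parametrization the integral becomes the classical Dirichlet integral over the standard simplex.

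First I would reduce to the standard simplex. Since any nondegenerate $d$-simplex $T_d$ is the image of the standard simplex $\Delta_d = \{\B{t}=(t_1,\ldots,t_d) : t_i\geq 0,\ \sum_{i=1}^d t_i \leq 1\}$ under an affine map, and since the barycentric coordinates satisfy $\lambda_0 = 1-\sum_{i=1}^d t_i$ and $\lambda_i = t_i$ for $i=1,\ldots,d$ in these coordinates, the change of variables yields a constant Jacobian proportional to $\lvert T_d\rvert$. This constant cancels against the normalizing factor $1/\lvert T_d\rvert$, so it suffices to establish
\begin{equation*}
\int_{\Delta_d}\Bigl(1-\sum_{i=1}^d t_i\Bigr)^{\gamma_0}\prod_{i=1}^{d}t_i^{\gamma_i}\, d\B{t} = \frac{\prod_{i=0}^{d}\Gamma(\gamma_i+1)}{\Gamma(d+1+\sum_{i=0}^{d}\gamma_i)},
\end{equation*}
after accounting for the factor $d!$ that arises because $\lvert\Delta_d\rvert = 1/d!$.

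The main computation is an induction on the dimension $d$, integrating out one variable at a time. For the base case $d=1$ the integral is exactly the Beta integral $\int_0^1 (1-t)^{\gamma_0} t^{\gamma_1}\, dt = B(\gamma_1+1,\gamma_0+1) = \Gamma(\gamma_0+1)\Gamma(\gamma_1+1)/\Gamma(\gamma_0+\gamma_1+2)$. For the inductive step I would integrate with respect to $t_d$ over the interval $[0,\, 1-\sum_{i=1}^{d-1}t_i]$; a linear substitution of the form $t_d = (1-\sum_{i=1}^{d-1}t_i)\,s$ with $s\in[0,1]$ factors out a power of $(1-\sum_{i=1}^{d-1}t_i)$ and produces an inner Beta integral in $s$. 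The remaining integral over $\Delta_{d-1}$ then has exactly the same form as the original with $d$ replaced by $d-1$ and with the exponent of the first factor suitably increased, so the induction hypothesis applies and the product of Gamma factors telescopes correctly.

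The main obstacle is bookkeeping: one must verify that the exponent appearing on the $(1-\sum t_i)$ factor after each integration combines cleanly with the Beta factor so that the successive arguments of $\Gamma$ in the denominator cancel in a telescoping fashion, leaving precisely $\Gamma(d+1+\sum_{i=0}^d \gamma_i)$. Care is also needed to handle the factor $d!$ consistently through the change of variables; since this is a standard classical result (referenced from Dunkl and Xu), I would present the reduction and the inductive Beta-integral step and rely on the telescoping to deliver the stated closed form.
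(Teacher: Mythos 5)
The paper does not actually prove this lemma: it is quoted as a classical result with a pointer to Dunkl and Xu, so there is no internal argument to compare against, and your proposal supplies precisely the standard proof that the reference relies on. Your plan is correct in all its steps. The affine map $\B{x}=\B{v}_0+\sum_{i=1}^{d}t_i(\B{v}_i-\B{v}_0)$ carries the standard simplex $\Delta_d$ onto $T_d$ with constant Jacobian $d!\,\lvert T_d\rvert$, and since barycentric coordinates are the affine functions with $\lambda_i(\B{v}_j)=\delta_{ij}$, one indeed gets $\lambda_i=t_i$ for $i=1,\dots,d$ and $\lambda_0=1-\sum_{i=1}^{d}t_i$; hence $\frac{1}{\lvert T_d\rvert}\int_{T_d}=d!\int_{\Delta_d}$, which accounts exactly for the factor $d!$. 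The induction closes as you predict: with $R=1-\sum_{i=1}^{d-1}t_i$, the substitution $t_d=Rs$ gives $\int_0^R (R-t_d)^{\gamma_0}t_d^{\gamma_d}\,dt_d = R^{\gamma_0+\gamma_d+1}B(\gamma_d+1,\gamma_0+1)$, so the induction hypothesis applies over $\Delta_{d-1}$ with $\gamma_0$ replaced by $\gamma_0+\gamma_d+1$; its denominator is $\Gamma\bigl(d+(\gamma_0+\gamma_d+1)+1+\sum_{i=1}^{d-1}\gamma_i\bigr)=\Gamma\bigl(d+1+\sum_{i=0}^{d}\gamma_i\bigr)$, while its numerator contributes $\Gamma(\gamma_0+\gamma_d+2)$, which cancels the denominator of the Beta factor $\Gamma(\gamma_d+1)\Gamma(\gamma_0+1)/\Gamma(\gamma_0+\gamma_d+2)$. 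So the telescoping you flagged as the main bookkeeping risk works out cleanly, and the nonnegativity of the $\gamma_i$ ensures every Beta integral encountered converges. In a final write-up you need only make the barycentric identification under the parametrization explicit; otherwise the argument is complete.
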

Throughout the paper, we consistently adopt the cyclic numbering convention $\B{v}_4 := \B{v}_1$ and $\B{v}_5 := \B{v}_2$, along with $ \lambda_4 := \lambda_1$ and $\lambda_5 := \lambda_2$.
For $i=1,2,3$, we denote by $\mathcal{I}_j^{\mathrm{CR}}$ the following linear functional 
\begin{equation}\label{lev}
 \mathcal{I}_j^{\mathrm{CR}}(f) :=\frac{1}{\left\lvert \Gamma_j\right\rvert} \int_{\Gamma_j} f(s)\, ds = \int_{0}^{1} f\left(t\B{v}_{j+1} + (1-t)\B{v}_{j+2}\right)\, dt, \quad j=1,2,3.
\end{equation}

The classical nonconforming Crouzeix--Raviart finite element (CR finite element) of the first order, introduced in~\cite{CR1973:2022:CR}, is locally defined as 
\begin{equation*}
CR := \left(T,  \mathbb{P}_1(T), \Sigma^{\mathrm{CR}}\right),    
\end{equation*}
where 
\begin{equation*}
     \mathbb{P}_1(T):=\operatorname{span}\{\lambda_1,\lambda_2,\lambda_3\}
\end{equation*}
is the space of bivariate linear polynomials on $T$ and 
\begin{equation*}
\Sigma^{\mathrm{CR}} := \left\{\mathcal{I}_j^{\mathrm{CR}} \, : \, j=1,2,3\right\}.     
\end{equation*}
Let $\alpha,\beta>-\frac{1}{2}$ be real numbers. Depending on these parameters, in this section, we introduce a quadratic polynomial enrichment of the finite element $CR$. To this aim, for each $j=1,2,3$, we define the following enriched linear functionals 
 \begin{equation}\label{ex3}
 \mathcal{F}_{j}^{\mathrm{enr}}(f):= \int_{0}^{1}w_\alpha(t)\left(2(\alpha+ \beta+1)\widetilde{L}_1^{2}(t)-(2\beta+1)\right)\left\lvert\widetilde{L}_1(t)\right\rvert^{2\beta}f\left(t\B{v}_{j+1}+ (1-t)\B{v}_{j+2}\right)\, dt,
\end{equation} 
where 
\begin{equation}\label{LegPol}
\widetilde{L}_1(t):=2t-1    
\end{equation}
is the shifted Legendre polynomial of degree one and 
\begin{equation}
    \label{weight}
w_\alpha(t):= t^{\alpha-\frac{1}{2}} (1 - t)^{\alpha-\frac{1}{2}}    
\end{equation}
is the weight function.
We define the following triple 
\begin{equation}\label{P2CRfinielement}
    \mathcal{C}:= \left(T,   \mathbb{P}_2(T),\Sigma_{2, T}^{\mathrm{enr}}\right),
\end{equation}
where   
 \begin{equation*}
 \Sigma_{2,T}^{\mathrm{enr}}:=  \left\{\mathcal{I}_j^{\mathrm{CR}},\mathcal{F}^{\mathrm{enr}}_{j}\, : \, j=1,2,3\right\}.
\end{equation*}

\begin{remark}
    We remark that, by using the change of variable  $u = 2t-1$, the enriched linear functionals $\mathcal{F}_{j}^{\mathrm{enr}}$, $j=1,2,3,$ can be rewritten as follows
  \begin{equation}\label{ex3b}
 \mathcal{F}_{j}^{\mathrm{enr}}(f) = \frac{1}{2^{2\alpha}} \int_{-1}^{1}\widetilde{w}_{\alpha,\beta}(u)p_2(u)f\left(\frac{1+u}{2}\B{v}_{j+1}+ \frac{1-u}{2}\B{v}_{j+2}\right)\, du, \qquad j=1,2,3,
\end{equation} 
where 
\begin{equation}\label{p2orth}
  p_2(u):= 2(\alpha+ \beta+1)u^2-(2\beta+1)
\end{equation}
and
\begin{equation}\label{newweight}
\widetilde{w}_{\alpha,\beta}(u):= (1-u^2)^{\alpha-\frac{1}{2}}\left\lvert u\right\rvert^{2\beta}.
\end{equation}
Moreover, we observe that the weight $\widetilde{w}_{\alpha,\beta}(u)$ is the product of the classical Gegenbauer (ultraspherical) weight function with $\left\lvert u\right\rvert^{2\beta}$.
\end{remark}

\begin{remark}
    Varying the parameters $\alpha$ and $\beta$, the polynomial $p_2$ defined in~\eqref{p2orth} used to define the enriched degrees of freedom~\eqref{ex3b} allow to cover a wide class of classical orthogonal polynomials. For example, up to scaling, we get
\begin{itemize}
    \item if $\alpha= \frac{1}{2}, \beta=0$,  $p_2(u)=3u^2-1$ is the 2-th Legendre polynomial;
    \item if $\alpha>\frac{1}{2}$, $\beta=0$,  $p_2(u)=2(\alpha+1)u^2-1$ is the 2-th Gegenbauer polynomial;
    \item if $\alpha= \beta$,  $\beta > 0$, $p_2=(2\beta+1)\left(2 u^2-1 \right)$ is the 2-th Chebyshev polynomial of  the first kind;
    \item if $\alpha= 3\beta+1,$ $\beta > 0$, $ p_2(u)=(2\beta+1)\left(4 u^2-1 \right)$ is the 2-th Chebyshev polynomial of  the second kind.
\end{itemize}
It should be mentioned that, in the last two cases, the corresponding weight functions $\widetilde{w}_{\alpha,\beta}(u)$ are not the classical Jacobi weights.
\end{remark}

We aim to demonstrate that the triple $\mathcal{C}$ defined in~\eqref{P2CRfinielement} is a finite element. To achieve this, the subsequent lemmas hold fundamental importance. 

\begin{lemma}\label{lem1}
The polynomial $p_2$ defined in~\eqref{p2orth} is orthogonal to all linear polynomials on the interval $[-1, 1]$ with respect to the weight function $\widetilde{w}_{\alpha,\beta}.$ In other words, for any linear polynomial $p\in  \mathbb{P}_1(T),$ the following equality holds
\begin{equation*}
\int_{-1}^{1}\widetilde{w}_{\alpha,\beta}(u) p_2(u)p(u)\, du = 0.
\end{equation*}
\end{lemma}
\begin{proof}
Since the integral of an odd function over a symmetric domain is equal to zero, it suffices to show that
 \begin{equation}\label{orth2}
\int_{-1}^{1}\widetilde{w}_{\alpha,\beta}(u) p_2(u) \, du =2 \int_{0}^{1}\widetilde{w}_{\alpha,\beta}(u) p_2(u) \, du =0.
\end{equation}
By~\eqref{p2orth} and~\eqref{newweight} and by using the change of variable $v= u^2$, we get
\begin{eqnarray}
&&2\int_{0}^{1}\widetilde{w}_{\alpha,\beta}(u) p_2(u) \, du = \notag \\ &=& 4(\alpha + \beta+1)\int_{0}^{1} u^{2\beta+2}(1-u^2)^{\alpha-1/2} du -2(2\beta+1)\int_{0}^{1}u^{2\beta}(1-u^2)^{\alpha-1/2} du \notag \\
&=& 2(\alpha + \beta+1)\int_{0}^{1} v^{\beta+\frac{1}{2}}(1-v)^{\alpha-\frac{1}{2}} dv -(2\beta+1)\int_{0}^{1}v^{\beta-\frac{1}{2}}(1-v)^{\alpha-\frac{1}{2}} dv \notag\\
&=&2(\alpha + \beta+1)B\left(\beta+\frac{3}{2},\alpha+\frac{1}{2}\right)-  (2\beta+1)B\left(\beta+\frac{1}{2},\alpha+\frac{1}{2}\right) \label{eq122}
\end{eqnarray}
where 
\begin{equation*}
    B(z_1,z_2)=\int_{0}^{1}u^{z_1-1}(1-u)^{z_2-1} du, \qquad z_1,z_2>-1,
\end{equation*}
is the classical Euler beta function~\cite{Abramowitz:1948:HOM}.
It is well known (see, e.g., \cite{Abramowitz:1948:HOM}) that this function satisfies
\begin{equation}\label{propbetafun}
B(z_1+1,z_2)=\frac{z_1}{z_1+z_2}B(z_1,z_2), \qquad B(z_1,z_2+1)=\frac{z_2}{z_1+z_2}B(z_1,z_2), \qquad z_1,z_2>-1.  
\end{equation}
Then, since 
\begin{equation}\label{orths}
    B\left(\beta+\frac{3}{2},\alpha+\frac{1}{2}\right)= \frac{2\beta+1}{ 2(\alpha + \beta+1)}B\left(\beta+\frac{1}{2},\alpha+\frac{1}{2}\right),
\end{equation}
the orthogonality relation~\eqref{orth2} follows by substituting~\eqref{orths} in~\eqref{eq122}. 
\end{proof}

The next result is a direct application of Lemma~\ref{lem1}.

\begin{lemma}\label{adomnnew}
  For any $p\in  \mathbb{P}_1(T)$, the following equality holds
 \begin{equation*}
\mathcal{F}_{j}^{{\mathrm{enr}}}(p) = 0, \qquad j=1,2,3.
 \end{equation*} 
\end{lemma}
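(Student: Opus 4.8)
The plan is to reduce the statement to the one-dimensional orthogonality relation already established in Lemma~\ref{lem1}. I would work with the reformulation~\eqref{ex3b} of the enriched functionals rather than the original definition~\eqref{ex3}, because it exhibits $\mathcal{F}_{j}^{\mathrm{enr}}$ explicitly as an integral over $[-1,1]$ against the product $\widetilde{w}_{\alpha,\beta}(u)\,p_2(u)$, which is precisely the configuration to which Lemma~\ref{lem1} applies.

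First I would fix $j\in\{1,2,3\}$ and a linear polynomial $p\in\mathbb{P}_1(T)$, and introduce the univariate function
\begin{equation*}
g_j(u) := p\!\left(\frac{1+u}{2}\B{v}_{j+1}+\frac{1-u}{2}\B{v}_{j+2}\right), \qquad u\in[-1,1],
\end{equation*}
which is exactly the quantity appearing inside the integral~\eqref{ex3b}. The key step is then to verify that $g_j$ is a polynomial of degree at most one in $u$. This holds because $p$ is a linear combination of the barycentric coordinates $\lambda_1,\lambda_2,\lambda_3$, each of which is affine: setting $t=\tfrac{1+u}{2}$ and invoking the affine property~\eqref{prop2}, one has $\lambda_i\!\left(t\B{v}_{j+1}+(1-t)\B{v}_{j+2}\right) = t\,\lambda_i(\B{v}_{j+1})+(1-t)\,\lambda_i(\B{v}_{j+2})$, which is affine in $t$ and hence affine in $u$. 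Therefore $g_j$ is a linear polynomial.

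With this observation in place the conclusion is immediate: from~\eqref{ex3b} we have
\begin{equation*}
\mathcal{F}_{j}^{\mathrm{enr}}(p) = \frac{1}{2^{2\alpha}}\int_{-1}^{1}\widetilde{w}_{\alpha,\beta}(u)\,p_2(u)\,g_j(u)\,du,
\end{equation*}
and since $g_j$ is linear, Lemma~\ref{lem1} forces this integral to vanish, for every $j=1,2,3$.

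There is no substantial obstacle in this argument; the only point that genuinely requires care is the reduction step, namely checking that restricting a bivariate affine polynomial to an affinely parametrized edge yields a univariate polynomial of degree at most one, so that the orthogonality of Lemma~\ref{lem1} can be invoked. Everything else amounts to a direct substitution into~\eqref{ex3b}.
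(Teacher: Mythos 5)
Your proposal is correct and follows essentially the same route as the paper: the paper likewise observes that the restriction $p\left(t\B{v}_{j+1}+(1-t)\B{v}_{j+2}\right)$ of a linear polynomial to an edge is linear in the parameter and then invokes Lemma~\ref{lem1}. Your version merely makes the change of variable to~\eqref{ex3b} explicit, which the paper leaves implicit.
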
 
\begin{proof}
 Let $p\in  \mathbb{P}_1(T)$. For each $j=1,2,3,$ its  restriction to $\Gamma_j$
 \begin{equation*}
     p(t\B v_{j+1}+ (1-t)\B v_{j+2}), \quad t\in[0,1], 
 \end{equation*}
is a linear polynomial in the variable $t$. The statement now follows trivially from Lemma~\ref{lem1}.
\end{proof}

In the following theorem, we prove that the triple  $\mathcal{C}$ defined in~\eqref{P2CRfinielement} is a finite element. 

\begin{theorem}\label{th1nnewfin1} 
  For any real numbers $\alpha,\beta>-\frac{1}{2}$ the triple $ \mathcal{C}$ defined in~\eqref{P2CRfinielement} is a finite element.
\end{theorem}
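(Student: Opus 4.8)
The plan is to verify that $\mathcal{C}$ is a finite element by checking unisolvence of the degrees of freedom. Since $\dim \mathbb{P}_2(T)=6$ coincides with the cardinality of $\Sigma_{2,T}^{\mathrm{enr}}$, it suffices to show that the only $p\in \mathbb{P}_2(T)$ annihilated by all six functionals is $p\equiv 0$. So I would fix $p\in \mathbb{P}_2(T)$ with $\mathcal{I}_j^{\mathrm{CR}}(p)=\mathcal{F}_j^{\mathrm{enr}}(p)=0$ for $j=1,2,3$, and aim to conclude $p=0$.

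The first step is to exploit the enriched functionals. For fixed $j$, the map $g_j(u):=p\!\left(\frac{1+u}{2}\B{v}_{j+1}+\frac{1-u}{2}\B{v}_{j+2}\right)$ is a univariate polynomial of degree at most $2$, which I would split as $g_j(u)=\ell_j(u)+c_j u^2$ with $\ell_j$ affine and $c_j$ the leading coefficient. Using the representation~\eqref{ex3b} and the orthogonality of $p_2$ to affine functions furnished by Lemma~\ref{lem1}, the contribution of $\ell_j$ drops out, leaving $\mathcal{F}_j^{\mathrm{enr}}(p)=\frac{c_j}{2^{2\alpha}}\int_{-1}^{1}\widetilde{w}_{\alpha,\beta}(u)\,p_2(u)\,u^2\,du$. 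Substituting $u^2=\frac{p_2(u)+(2\beta+1)}{2(\alpha+\beta+1)}$ from~\eqref{p2orth} and invoking Lemma~\ref{lem1} once more to discard the constant term yields $\mathcal{F}_j^{\mathrm{enr}}(p)=\frac{c_j}{2^{2\alpha}\,2(\alpha+\beta+1)}\int_{-1}^{1}\widetilde{w}_{\alpha,\beta}(u)\,p_2(u)^2\,du$. Because $\alpha,\beta>-\tfrac12$ the weight $\widetilde{w}_{\alpha,\beta}$ is nonnegative and integrable and $p_2\not\equiv 0$, so this integral is strictly positive; hence $\mathcal{F}_j^{\mathrm{enr}}(p)=0$ forces $c_j=0$, i.e. the restriction of $p$ to each edge $\Gamma_j$ is affine.

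Next I would bring in the CR functionals. Since $p|_{\Gamma_j}$ is affine, its edge-average $\mathcal{I}_j^{\mathrm{CR}}(p)$ equals the mean of its endpoint values $\frac12\bigl(p(\B{v}_{j+1})+p(\B{v}_{j+2})\bigr)$. The vanishing of $\mathcal{I}_j^{\mathrm{CR}}(p)$ for $j=1,2,3$ then gives the system $p(\B{v}_2)+p(\B{v}_3)=p(\B{v}_3)+p(\B{v}_1)=p(\B{v}_1)+p(\B{v}_2)=0$, whose unique solution is $p(\B{v}_1)=p(\B{v}_2)=p(\B{v}_3)=0$. As $p|_{\Gamma_j}$ is affine and vanishes at both endpoints of $\Gamma_j$, it vanishes identically on $\Gamma_j$, so $p$ vanishes on the whole boundary $\partial T$. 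Finally, a polynomial vanishing on the segment $\Gamma_i$ vanishes on the entire line $\{\lambda_i=0\}$, so by~\eqref{propstar} each $\lambda_i$ divides $p$ and therefore $\lambda_1\lambda_2\lambda_3\mid p$; since $\deg(\lambda_1\lambda_2\lambda_3)=3>2\ge\deg p$, this forces $p\equiv 0$.

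The only delicate point is the first step: recognizing, through the orthogonality of Lemma~\ref{lem1} and the strict positivity of $\int_{-1}^{1}\widetilde{w}_{\alpha,\beta}\,p_2^2$, that each enriched functional detects precisely the quadratic coefficient of the corresponding edge restriction (this is the quantitative sharpening of Lemma~\ref{adomnnew}). Everything afterward is elementary linear algebra together with a degree count, so I expect the positivity/orthogonality reduction to be where the real work lies.
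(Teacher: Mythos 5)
Your proof is correct, but it takes a genuinely different route from the paper's. The paper decomposes $p=p_1+a\lambda_1^2+b\lambda_2^2+c\lambda_3^2$ with $p_1\in\mathbb{P}_1(T)$, computes the values $\mathcal{F}_j^{\mathrm{enr}}(\lambda_k^2)=K$ explicitly as a Beta integral, kills $a,b,c$ by noting the matrix with zero diagonal and $K$ off-diagonal has determinant $2K^3\neq 0$, and then invokes unisolvence of the classical CR element as a black box for the remaining linear part. You instead work edge by edge: via~\eqref{ex3b}, Lemma~\ref{lem1}, and the identity $u^2=\bigl(p_2(u)+2\beta+1\bigr)/\bigl(2(\alpha+\beta+1)\bigr)$, you show that $\mathcal{F}_j^{\mathrm{enr}}(p)$ is a strictly positive multiple of the quadratic coefficient $c_j$ of the edge restriction --- a quantitative sharpening of Lemma~\ref{adomnnew} --- so all edge restrictions are affine; the CR averages then force the vertex values to vanish, hence $p$ vanishes on $\partial T$, and divisibility by $\lambda_1\lambda_2\lambda_3$ plus a degree count finishes. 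Your route buys robustness: no constant is ever computed (only strict positivity of $\int_{-1}^{1}\widetilde{w}_{\alpha,\beta}\,p_2^2\,du$ is used, which holds for any admissible weight, with $\alpha,\beta>-\tfrac12$ guaranteeing integrability and $\alpha+\beta+1>0$ guaranteeing $p_2\not\equiv 0$), and you do not need CR unisolvence as an external input. What the paper's computation buys in exchange is the explicit value of $K$ from~\eqref{denot}, which is recycled in Theorem~\ref{th2allalf} to obtain the closed-form basis functions, and the matrix formulation that the subsequent Remark identifies as the necessary and sufficient admissibility condition. One small point you should make explicit in the final step: the three lines $\{\lambda_i=0\}$ are pairwise distinct because $T$ is nondegenerate, so the affine factors $\lambda_1,\lambda_2,\lambda_3$ are pairwise coprime and $\lambda_1\lambda_2\lambda_3\mid p$ legitimately follows from the three separate divisibilities.
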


\begin{proof}
 Let $p \in   \mathbb{P}_2(T)$ such that 
\begin{eqnarray}
  \mathcal{I}_j^{\mathrm{CR}}(p) &=& 0, \qquad j=1,2,3, \label{ho1}\\
  \mathcal{F}_{j}^{\mathrm{enr}}(p) &=& 0, \qquad j=1,2,3. \label{ho21}
\end{eqnarray}
We must prove that $p=0$. Since  $p\in  \mathbb{P}_2(T)$, it  can be expressed as   
\begin{equation*}
  p= p_1+ a\lambda_1^2+ b \lambda_2^2+c\lambda_3^2,
\end{equation*}
where  $p_1 \in   \mathbb{P}_1(T)$ and $a,b,c \in \mathbb{R}.$ Considering~\eqref{propstar} and Lemma~\ref{lem1}, we obtain
\begin{eqnarray}
0 &=&\mathcal{F}_{1}^{\mathrm{enr}}(p) = b \mathcal{F}_{1}^{\mathrm{enr}}(\lambda_2^2)+c \mathcal{F}_{1}^{\mathrm{enr}}(\lambda_3^2) ,\label{genn1new1} \\ 
0&=& \mathcal{F}_{2}^{\mathrm{enr}}(p) = a \mathcal{F}_{2}^{\mathrm{enr}}(\lambda_1^2)+ c \mathcal{F}_{2}^{\mathrm{enr}}(\lambda_3^2),\label{genn2new2} \\ 
0&=&\mathcal{F}_{3}^{\mathrm{enr}}(p) =  a \mathcal{F}_{3}^{\mathrm{enr}}(\lambda_1^2) + b \mathcal{F}_{3}^{\mathrm{enr}}(\lambda_2^2).\label{genn3new3} 
\end{eqnarray}
Using properties~\eqref{bary} and~\eqref{prop2}, along with straightforward computations, we obtain
\begin{equation*}
\mathcal{F}_{1}^{\mathrm{enr}}(\lambda_2^2)=\mathcal{F}_{1}^{\mathrm{enr}}(\lambda_3^2)=\mathcal{F}_{2}^{\mathrm{enr}}(\lambda_1^2)=\mathcal{F}_{2}^{\mathrm{enr}}(\lambda_3^2)=
\mathcal{F}_{3}^{\mathrm{enr}}(\lambda_1^2)=\mathcal{F}_{3}^{\mathrm{enr}}(\lambda_2^2)=
K,
\end{equation*}
where
\begin{equation}\label{denot}
  K:=\frac{1}{2^{2\alpha+1}}B\left(\beta+\frac{3}{2},\alpha+\frac{3}{2}\right).
\end{equation}
Therefore, we can rewrite~\eqref{genn1new1}, \eqref{genn2new2} and~\eqref{genn3new3} in matrix form as follows
\begin{equation*}
    \begin{bmatrix}
0 &  K & K\\
K & 0& K \\
K &K &0
\end{bmatrix}
 \begin{bmatrix}
a\\
b\\
c
\end{bmatrix}
= \begin{bmatrix}
0\\
0\\
0
\end{bmatrix}.
 \end{equation*}
The matrix associated with the linear system has determinant $2K^3 \neq 0$. Therefore, the linear system has the unique solution $a=b=c=0$. Consequently, $p=p_1$ is a linear polynomial. The statement follows from the CR element, implying that $p_1=0$.
\end{proof}

\begin{remark}
    From the previous theorem, it can be observed that the nonsingularity of the matrix 
    \begin{equation*}
      \begin{bmatrix}
\mathcal{F}_{1}^{\mathrm{enr}}(\lambda_1^2) &  \mathcal{F}_{1}^{\mathrm{enr}}(\lambda_2^2) & \mathcal{F}_{1}^{\mathrm{enr}}(\lambda_3^2)\\
\mathcal{F}_{2}^{\mathrm{enr}}(\lambda_1^2) & \mathcal{F}_{2}^{\mathrm{enr}}(\lambda_2^2)& \mathcal{F}_{2}^{\mathrm{enr}}(\lambda_3^2) \\
\mathcal{F}_{3}^{\mathrm{enr}}(\lambda_1^2) & \mathcal{F}_{3}^{\mathrm{enr}}(\lambda_2^2)& \mathcal{F}_{3}^{\mathrm{enr}}(\lambda_3^2) \\
\end{bmatrix}=
     \begin{bmatrix}
0 &  K & K\\
K & 0& K \\
K &K &0
\end{bmatrix}
\end{equation*}
    is a necessary and sufficient condition for $\mathcal{C}$ to be a finite element.
\end{remark}

As a direct consequence of Theorem~\ref{th1nnewfin1}, we can establish the existence of a basis
\begin{equation*}
    \mathcal{B}_2=\{\varphi_i, \phi_i\, :\, i=1,2,3\}
\end{equation*}
of the polynomial space $\mathbb{P}_2(T)$ satisfying 
the following conditions
\begin{eqnarray}
\label{propvarphi}
&& \mathcal{I}_j^{\mathrm{CR}}(\varphi_i) = \delta_{ij}, \quad \mathcal{F}^{\mathrm{enr}}_j(\varphi_i) = 0, \quad i,j=1,2,3, \\ \label{propphi}
&&\mathcal{I}_j^{\mathrm{CR}}(\phi_i) = 0, \quad \mathcal{F}^{\mathrm{enr}}_j(\phi_i) = \delta_{ij}, \quad i,j=1,2,3.
\end{eqnarray}
Here, $\delta_{ij}$ represents the Kronecker delta symbol. These functions are commonly referred to as the basis functions of the enriched finite element $\mathcal{C}$.
In the next Theorem, we give simple, closed-form expressions of the basis functions of $ \mathcal{C}.$
\begin{theorem}\label{th2allalf} The basis functions of the enriched finite element $\mathcal{C}$ have the following expressions
\begin{eqnarray}
    \varphi_i&=& 1-2\lambda_i,  \qquad i=1,2,3 \label{varphi}\\
    \phi_i&=&-\frac{\varphi_i}{3K}+\frac{1}{2K} (-\lambda_{i}^2 +\lambda_{i+1}^2+\lambda_{i+2}^2), \qquad i=1,2,3, \label{phi}
\end{eqnarray}
where the normalized coefficient $K$ is defined in~\eqref{denot}. 
      \end{theorem}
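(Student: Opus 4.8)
The plan is to verify directly that the polynomials displayed in~\eqref{varphi} and~\eqref{phi} satisfy the biorthogonality conditions~\eqref{propvarphi} and~\eqref{propphi}. Since Theorem~\ref{th1nnewfin1} guarantees that $\mathcal{C}$ is a finite element, the basis $\mathcal{B}_2$ is uniquely determined by those conditions, so this verification suffices. The whole computation rests on two elementary edge moments, which I would establish first: for $j,k=1,2,3$,
\begin{equation*}
\mathcal{I}_j^{\mathrm{CR}}(\lambda_k) = \tfrac{1}{2}\left(1-\delta_{jk}\right), \qquad \mathcal{I}_j^{\mathrm{CR}}(\lambda_k^2) = \tfrac{1}{3}\left(1-\delta_{jk}\right).
\end{equation*}
Both follow by inserting the affine identity~\eqref{prop2} together with~\eqref{bary} into the definition~\eqref{lev}: on $\Gamma_j$ one has $\lambda_{j+1}\left(t\B{v}_{j+1}+(1-t)\B{v}_{j+2}\right)=t$ and $\lambda_{j+2}\left(t\B{v}_{j+1}+(1-t)\B{v}_{j+2}\right)=1-t$, while $\lambda_j$ vanishes on $\Gamma_j$ by~\eqref{propstar}, so the integrals reduce to $\int_0^1 t\,dt$, $\int_0^1 t^2\,dt$ and their reflections.

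First I would treat $\varphi_i=1-2\lambda_i$. Being linear, it satisfies $\mathcal{F}_j^{\mathrm{enr}}(\varphi_i)=0$ for all $j$ at once by Lemma~\ref{adomnnew}, which settles the enriched half of~\eqref{propvarphi}. For the Crouzeix--Raviart functionals, linearity and the first moment formula give $\mathcal{I}_j^{\mathrm{CR}}(\varphi_i)=1-2\cdot\tfrac{1}{2}\left(1-\delta_{ij}\right)=\delta_{ij}$, which is precisely~\eqref{propvarphi}.

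For $\phi_i$ I would reuse the enriched moments already computed inside the proof of Theorem~\ref{th1nnewfin1}, namely $\mathcal{F}_j^{\mathrm{enr}}(\lambda_k^2)=K\left(1-\delta_{jk}\right)$ with $K$ as in~\eqref{denot}. Setting $q_i:=-\lambda_i^2+\lambda_{i+1}^2+\lambda_{i+2}^2$, a short check over $j\in\{i,i+1,i+2\}$ under the cyclic convention gives $\mathcal{F}_j^{\mathrm{enr}}(q_i)=2K\delta_{ij}$ and $\mathcal{I}_j^{\mathrm{CR}}(q_i)=\tfrac{2}{3}\delta_{ij}$. Since $\mathcal{F}_j^{\mathrm{enr}}(\varphi_i)=0$, the correction term $-\varphi_i/(3K)$ leaves the enriched values untouched, so $\mathcal{F}_j^{\mathrm{enr}}(\phi_i)=\tfrac{1}{2K}\mathcal{F}_j^{\mathrm{enr}}(q_i)=\delta_{ij}$; for the Crouzeix--Raviart functionals, $\mathcal{I}_j^{\mathrm{CR}}(\phi_i)=-\tfrac{1}{3K}\delta_{ij}+\tfrac{1}{2K}\cdot\tfrac{2}{3}\delta_{ij}=0$. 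This confirms~\eqref{propphi} and completes the argument.

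The computation is entirely routine once the edge moments are in hand; the only point requiring care is the cyclic index bookkeeping in the case analysis for $q_i$, where the diagonal vanishings $\mathcal{F}_j^{\mathrm{enr}}(\lambda_j^2)=0$ and $\mathcal{I}_j^{\mathrm{CR}}(\lambda_j^2)=0$ both stem from $\lambda_j$ being zero on $\Gamma_j$ via~\eqref{propstar}. Structurally, the shape of~\eqref{phi} is explained by the fact that $\tfrac{1}{2K}q_i$ already realizes the correct enriched values but carries a spurious diagonal Crouzeix--Raviart moment $\tfrac{1}{3K}$, which $-\varphi_i/(3K)$ is tailored to cancel without perturbing the enriched functionals.
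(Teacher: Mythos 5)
Your verification is correct, but it runs the paper's argument in the opposite logical direction. The paper \emph{derives} the formulas: it writes $\varphi_1$ and $\phi_1$ as general elements of $\mathbb{P}_2(T)$ in the basis $\{\lambda_i,\lambda_k^2 : i,k=1,2,3\}$, applies the enriched functionals to get $3\times 3$ linear systems with the matrix whose $(j,k)$ entry is $K(1-\delta_{jk})$, inverts that matrix to pin down the quadratic coefficients (all zero for $\varphi_1$; $-\tfrac{1}{2K},\tfrac{1}{2K},\tfrac{1}{2K}$ for $\phi_1$), and then determines the linear coefficients from the Crouzeix--Raviart moments. You instead \emph{check} that the stated polynomials satisfy the duality conditions~\eqref{propvarphi}--\eqref{propphi} and appeal to uniqueness of the dual basis; that appeal is legitimate and you ground it correctly, since Theorem~\ref{th1nnewfin1} gives unisolvence of $\Sigma_{2,T}^{\mathrm{enr}}$ on the six-dimensional space $\mathbb{P}_2(T)$, so any two bases satisfying the Kronecker conditions coincide. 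Your computations all check out: $\mathcal{I}_j^{\mathrm{CR}}(\lambda_k)=\tfrac12(1-\delta_{jk})$ and $\mathcal{I}_j^{\mathrm{CR}}(\lambda_k^2)=\tfrac13(1-\delta_{jk})$ follow from~\eqref{prop2},~\eqref{bary} and~\eqref{propstar} exactly as you say; the values $\mathcal{F}_j^{\mathrm{enr}}(\lambda_k^2)=K(1-\delta_{jk})$ are indeed available from the proof of Theorem~\ref{th1nnewfin1}, which precedes this statement; and with $q_i=-\lambda_i^2+\lambda_{i+1}^2+\lambda_{i+2}^2$ the cyclic case check gives $\mathcal{F}_j^{\mathrm{enr}}(q_i)=2K\delta_{ij}$, $\mathcal{I}_j^{\mathrm{CR}}(q_i)=\tfrac23\delta_{ij}$, whence $\mathcal{F}_j^{\mathrm{enr}}(\phi_i)=\delta_{ij}$ and $\mathcal{I}_j^{\mathrm{CR}}(\phi_i)=-\tfrac{1}{3K}\delta_{ij}+\tfrac{1}{3K}\delta_{ij}=0$. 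What each route buys: yours is shorter, avoids inverting the singular-looking system entirely, and makes the structure of~\eqref{phi} transparent (as you note, $\tfrac{1}{2K}q_i$ already has the right enriched moments and the term $-\varphi_i/(3K)$ is tailored to cancel the spurious diagonal CR moment without disturbing the $\mathcal{F}_j^{\mathrm{enr}}$ values, by Lemma~\ref{adomnnew}); the paper's derivation, at the cost of more computation, explains how the expressions are found in the first place rather than merely confirmed.
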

\begin{proof}
Without loss on generality, we prove the theorem for $i=1$. 

Since $\varphi_1\in \mathbb{P}_2(T)$, it can be expressed as
\begin{equation}\label{phi1old}
\varphi_1 = \sum_{i=1}^3 a_{1,i}\lambda_i + \sum_{k=1}^3 b_{1,k} \lambda_k^2,
\end{equation}
where $a_{1,i}, b_{1,k}\in \mathbb{R}$, $i,k=1,2,3$.
Applying the linear functional $\mathcal{F}^{\mathrm{enr}}_j$, $j=1,2,3$, to both sides of~\eqref{phi1old} and leveraging Lemma~\ref{adomnnew} and~\eqref{propvarphi}, we derive the linear system
\begin{equation*}
0 = \mathcal{F}^{\mathrm{enr}}_j(\varphi_1) = \sum_{\substack{k=1}}^{3} b_{1,k} \mathcal{F}^{\mathrm{enr}}_j(\lambda^2_k)=\sum_{\substack{k=1 \\ k \neq j}}^{3} b_{1,k} K, \qquad j=1,2,3,
\end{equation*}
where $\mathcal{F}^{\mathrm{enr}}_j(\lambda^2_j)=0$ due to~\eqref{propstar}.
Expressing this system more compactly, we get
\begin{equation*}
  \begin{bmatrix}
0 &  K & K\\
K & 0& K \\
K &K &0
\end{bmatrix}
\begin{bmatrix}
b_{1,1}\\
b_{1,2}\\
b_{1,3}\\
\end{bmatrix}
= \begin{bmatrix}
0\\
0\\
0\\
\end{bmatrix}.
\end{equation*}
Since the associated matrix is nonsingular, we conclude that 
\begin{equation*}
b_{1,1} = b_{1,2} = b_{1,3} = 0.
\end{equation*}
Substituting these values into~\eqref{phi1old}, we obtain
\begin{equation*}
\varphi_1 = \sum_{i=1}^3 a_{1,i}\lambda_i.
\end{equation*}
By~\eqref{propvarphi},~\eqref{idbc} and~\eqref{prop2}, we have
\begin{eqnarray*}
1 &=& \mathcal{I}^{\mathrm{CR}}_{1}(\varphi_1) = \frac{a_{1,2}+a_{1,3}}{2} \\
0 &=& \mathcal{I}^{\mathrm{CR}}_{2}(\varphi_1) = \frac{a_{1,3}+a_{1,1}}{2} \\
0 &=& \mathcal{I}^{\mathrm{CR}}_{3}(\varphi_1) = \frac{a_{1,1}+a_{1,2}}{2}.
\end{eqnarray*}
This leads to the expression
\begin{equation*}
\varphi_1 = 1-2\lambda_1.
\end{equation*}

It remains to prove~\eqref{phi}. Since $\phi_1\in \mathbb{P}_2(T)$, it can be expressed as
\begin{equation}\label{phi1}
\phi_1 = \sum_{i=1}^3 c_{1,i}\lambda_i + \sum_{k=1}^3 d_{1,k} \lambda_k^2.
\end{equation}
Applying the linear functional $\mathcal{F}^{\mathrm{enr}}_j$, $j=1,2,3,$ to both sides of~\eqref{phi1}, and leveraging Lemma~\ref{adomnnew} and~\eqref{propvarphi}, we derive the linear system

\begin{equation*} 
\delta_{1j} = \mathcal{F}^{\mathrm{enr}}_j(\phi_1) =\sum_{\substack{k=1}}^{3} d_{1,k} \mathcal{F}^{\mathrm{enr}}_j(\lambda_k^2)= \sum_{\substack{k=1 \\ k \neq j}}^{3} d_{1,k} K, \qquad j=1,2,3.
\end{equation*}
Expressing this system more compactly, we have
\begin{equation*}
  \begin{bmatrix}
0 &  K & K\\
K & 0& K \\
K &K &0
\end{bmatrix}
\begin{bmatrix}
d_{1,1}\\
d_{1,2}\\
d_{1,3}\\
\end{bmatrix}
= \begin{bmatrix}
1\\
0\\
0\\
\end{bmatrix}.
\end{equation*}
Then, we get 
\begin{equation*}
 \frac{1}{2K^3} \begin{bmatrix}
-K^2 &  K^2 & K^2\\
K^2 & -K^2& K^2 \\
K^2 &K^2 &-K^2
\end{bmatrix}
\begin{bmatrix}
1\\
0\\
0\\
\end{bmatrix}
=\begin{bmatrix}
d_{1,1}\\
d_{1,2}\\
d_{1,3}\\
\end{bmatrix},
\end{equation*}
and therefore
\begin{equation*}
    d_{1,1}=-\frac{1}{2K}, \quad d_{1,2}=\frac{1}{2K}, \quad d_{1,3}=\frac{1}{2K}.
\end{equation*}
Substituting these values into~\eqref{phi1}, we obtain 
\begin{equation*} 
    \phi_1=\sum_{i=1}^3 c_{1,i} \lambda_i +\frac{1}{2K}(-\lambda_1^2+\lambda^2_2+\lambda^2_3). 
\end{equation*}
By~\eqref{propphi},~\eqref{idbc} and~\eqref{prop2}, we get
\begin{eqnarray*}
0 &=& \mathcal{I}^{\mathrm{CR}}_{1}(\phi_1) = \frac{c_{1,2}+c_{1,3}}{2}+\frac{1}{3K} \\
0 &=& \mathcal{I}^{\mathrm{CR}}_{2}(\phi_1) = \frac{c_{1,3}+c_{1,1}}{2} \\
0 &=& \mathcal{I}^{\mathrm{CR}}_{3}(\phi_1) = \frac{c_{1,1}+c_{1,2}}{2}.
\end{eqnarray*}
Then, we have
\begin{equation*}
    c_{1,1}=\frac{1}{3K}, \quad c_{1,2}=-\frac{1}{3K}, \quad c_{1,3}= -\frac{1}{3K}. 
\end{equation*}
This leads to the expression
\begin{equation*}
    \phi_1= -\frac{\varphi_1}{3K}+\frac{1}{2K}(-\lambda_1^2+\lambda^2_2+\lambda^2_3).
\end{equation*}
Analogously, the theorem can be proven for $i=2$ and $i=3$, and thus, the thesis follows.
\end{proof}

\begin{remark}
We observe that the basis functions $\varphi_i$, $i=1,2,3$, defined in~\eqref{varphi}, correspond to the Crouzeix–Raviart nonconforming basis functions, see~\cite{Guessab:2016:AADM}.
\end{remark}

\begin{theorem}
 The approximation operator relative to the enriched finite element $\mathcal{C}$
\begin{equation}\label{pilinch9C}
\begin{array}{rcl}
{\Pi}_2^{{\mathrm{enr}}}: C(T) &\rightarrow& \mathbb{P}_2(T)
\\
f &\mapsto& \displaystyle{\sum_{j=1}^{3}  \mathcal{I}^{\mathrm{CR}}_j(f)\varphi_j+ \sum_{j=1}^{3}\mathcal{F}^{\mathrm{enr}}_j(f)}\phi_j,
\end{array}
\end{equation}
reproduces all polynomials of $\mathbb{P}_2(T)$ and satisfies 
\begin{eqnarray*}
\mathcal{I}^{\mathrm{CR}}_j\left({\Pi}_2^{\mathrm{enr}}[f]\right)&=&\mathcal{I}^{\mathrm{CR}}_j(f), \qquad j=1,2,3, \\ \mathcal{F}^{\mathrm{enr}}_j\left({\Pi}_2^{\mathrm{enr}}[f]\right)&=&\mathcal{F}^{\mathrm{enr}}_j(f), \qquad j=1,2,3.
\end{eqnarray*} 
\end{theorem}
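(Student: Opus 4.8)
The plan is to establish the two interpolation identities first and then deduce the reproduction property from them. The essential tools are the biorthogonality of the basis functions recorded in~\eqref{propvarphi} and~\eqref{propphi}, together with the unisolvence of $\mathcal{C}$ guaranteed by Theorem~\ref{th1nnewfin1}.

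First I would fix an index $k\in\{1,2,3\}$ and apply the functional $\mathcal{I}_k^{\mathrm{CR}}$ to the definition~\eqref{pilinch9C} of $\Pi_2^{\mathrm{enr}}[f]$. Since $\mathcal{I}_k^{\mathrm{CR}}$ is linear, it passes through the two finite sums, yielding
\[
\mathcal{I}_k^{\mathrm{CR}}\left(\Pi_2^{\mathrm{enr}}[f]\right) = \sum_{j=1}^3 \mathcal{I}_j^{\mathrm{CR}}(f)\,\mathcal{I}_k^{\mathrm{CR}}(\varphi_j) + \sum_{j=1}^3 \mathcal{F}_j^{\mathrm{enr}}(f)\,\mathcal{I}_k^{\mathrm{CR}}(\phi_j).
\]
By~\eqref{propvarphi} one has $\mathcal{I}_k^{\mathrm{CR}}(\varphi_j)=\delta_{kj}$, and by~\eqref{propphi} one has $\mathcal{I}_k^{\mathrm{CR}}(\phi_j)=0$; hence the second sum vanishes and the first collapses to the single term $\mathcal{I}_k^{\mathrm{CR}}(f)$, which is the first claimed identity. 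An identical computation with $\mathcal{F}_k^{\mathrm{enr}}$ in place of $\mathcal{I}_k^{\mathrm{CR}}$, now using $\mathcal{F}_k^{\mathrm{enr}}(\varphi_j)=0$ and $\mathcal{F}_k^{\mathrm{enr}}(\phi_j)=\delta_{kj}$, gives the second identity.

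For the reproduction property, I would take an arbitrary $p\in\mathbb{P}_2(T)$ and set $g:=\Pi_2^{\mathrm{enr}}[p]-p$. By construction $\Pi_2^{\mathrm{enr}}[p]\in\mathbb{P}_2(T)$, so $g\in\mathbb{P}_2(T)$ as well. Applying the two interpolation identities just proved (with $f=p$) shows that every degree of freedom in $\Sigma_{2,T}^{\mathrm{enr}}$ annihilates $g$, i.e.\ $\mathcal{I}_j^{\mathrm{CR}}(g)=0$ and $\mathcal{F}_j^{\mathrm{enr}}(g)=0$ for $j=1,2,3$. Since $\mathcal{C}$ is a finite element by Theorem~\ref{th1nnewfin1}, these six conditions force $g=0$, that is $\Pi_2^{\mathrm{enr}}[p]=p$.

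There is no genuine obstacle here: the argument is the standard verification that a nodal operator built from a biorthogonal basis is a projector onto its target space. The only point requiring care is the logical order — one must prove the interpolation identities before invoking unisolvence, since the reproduction of $\mathbb{P}_2(T)$ is derived from those identities rather than established directly.
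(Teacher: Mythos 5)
Your proposal is correct and follows essentially the same route as the paper, whose proof is exactly the standard biorthogonality argument based on~\eqref{propvarphi} and~\eqref{propphi}; your derivation of the reproduction property via $g:=\Pi_2^{\mathrm{enr}}[p]-p$ and the unisolvence from Theorem~\ref{th1nnewfin1} is an equivalent (and slightly more explicit) way of saying that $\mathcal{B}_2$ is a basis of $\mathbb{P}_2(T)$ in which the coefficients of any $p$ are read off by the degrees of freedom.
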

\begin{proof}
The proof is a consequence of~\eqref{propvarphi} and~\eqref{propphi}.
\end{proof}

\section{A cubic polynomial enrichment of the Crouzeix--Raviart finite element}\label{sec3}
Let $\alpha, \beta > -\frac{1}{2}$ be real numbers. In this section, we introduce a cubic polynomial enrichment of the Crouzeix--Raviart finite element, based on the aforementioned parameters. Following a similar approach as in the previous section, we define the following enriched linear functionals for each $j=1,2,3$
\begin{align*}
     \mathcal{L}_{j}^{{\mathrm{enr}}}(f) &:= \int_{0}^{1}w_\alpha(t) \left(2(\alpha + \beta + 2)\widetilde{L}_1^{2}(t) - (2\beta + 3)\right)\left\lvert \widetilde{L}_1(t)\right\rvert^{2\beta}\widetilde{L}_1(t)f\left(t\B{v}_{j+1} + (1-t)\B{v}_{j+2}\right)\, dt, \\
      \mathcal{J}^{{\mathrm{enr}}}(f) &:= \int_{T} f(\B{x})\, d\B{x},
\end{align*}
where $\widetilde{L}_1(t)$ and $w_\alpha(t)$ are defined in~\eqref{LegPol} and~\eqref{weight}, respectively. We define the triple 
\begin{equation}\label{tripleS}
\mathcal{S}:= (T, \mathbb{P}_3(T), \Sigma_{3,T}^{{\mathrm{enr}}}),    
\end{equation}
with 
 \begin{equation*}
 \Sigma_{3,T}^{{\mathrm{enr}}}:=  \left\{ \mathcal{I}_j^{\mathrm{CR}},\mathcal{F}^{\mathrm{enr}}_{j},\mathcal{L}^{\mathrm{enr}}_{j}, \mathcal{J}^{\mathrm{enr}} \, : \, j=1,2, 3 \right\}
\end{equation*}
where $\mathcal{I}_j^{\mathrm{CR}},\mathcal{F}^{\mathrm{enr}}_{j}$ $j=1,2,3,$ are defined as in~\eqref{lev} and~\eqref{ex3}, respectively.

\begin{remark}
    We remark that, by using the change of variable  $u = 2t-1$, the enriched linear functionals $ \mathcal{L}_{j}^{{\mathrm{enr}}}$, $j=1,2,3,$ can be rewritten as follows
 \begin{equation*}
 \mathcal{L}_{j}^{{\mathrm{enr}}}(f) = \frac{1}{2^{2\alpha}} \int_{-1}^{1}\widetilde{w}_{\alpha,\beta}(u)uq_2(u)f\left(\frac{1+u}{2}\B v_{j+1}+ \frac{1-u}{2}\B v_{j+2}\right)\, du, \qquad j=1,2,3,
\end{equation*}  
where
 \begin{equation}\label{orth2z}
   q_2(u) :=  2(\alpha+ \beta+2)u^2-(2\beta+3)
 \end{equation}
 and $ \widetilde{w}_{\alpha,\beta}$ is defined in~\eqref{newweight}.
\end{remark}

We aim to demonstrate that the triple $\mathcal{S}$ defined in~\eqref{tripleS} is a finite element. The subsequent lemmas play a crucial role in achieving this goal. In analogy to Lemma~\ref{lem1}, the following lemma establishes the orthogonality of the polynomial $uq_2$ within the space of quadratic polynomials on the interval $[-1, 1]$ with respect to the weight function $\widetilde{w}_{\alpha,\beta}$.

\begin{lemma}\label{lem1gennew}
The polynomial $uq_2$ is orthogonal to all quadratic polynomials on the interval $[-1, 1]$ with respect to the weight function $\widetilde{w}_{\alpha,\beta}.$ In other words, for any quadratic polynomial $p\in  \mathbb{P}_2(T),$ the following equality holds
  \begin{equation*}
   \int_{-1}^{1}\widetilde{w}_{\alpha,\beta}(u) uq_2(u)p(u)\, du =0.
\end{equation*} 
\end{lemma}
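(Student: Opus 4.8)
The plan is to mirror the proof of Lemma~\ref{lem1}, exploiting a parity argument to reduce the claim to a single beta-function identity. Recall from~\eqref{orth2z} that $q_2(u)=2(\alpha+\beta+2)u^2-(2\beta+3)$, so $uq_2(u)$ is an odd polynomial of degree three, whereas the weight $\widetilde{w}_{\alpha,\beta}$ of~\eqref{newweight} is even. First I would write an arbitrary univariate polynomial $p$ of degree at most two as the sum of its even and odd parts, $p(u)=(a_0+a_2u^2)+a_1u$. Since the product of the even weight $\widetilde{w}_{\alpha,\beta}$ with the odd polynomial $uq_2$ is odd, multiplying by the even component $a_0+a_2u^2$ produces an odd integrand, whose integral over the symmetric interval $[-1,1]$ vanishes automatically. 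Hence the only surviving contribution comes from the odd component $a_1u$, and the lemma reduces to proving
\begin{equation*}
\int_{-1}^{1}\widetilde{w}_{\alpha,\beta}(u)\,u^2 q_2(u)\,du = 2\int_{0}^{1}\widetilde{w}_{\alpha,\beta}(u)\,u^2 q_2(u)\,du = 0.
\end{equation*}

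Next I would evaluate this integral explicitly. Substituting $v=u^2$, exactly as in the proof of Lemma~\ref{lem1}, converts each monomial integral $\int_{0}^{1}u^{2k}(1-u^2)^{\alpha-1/2}\,du$ into $\tfrac{1}{2}B\!\left(k+\tfrac{1}{2},\alpha+\tfrac{1}{2}\right)$. Since $u^2 q_2(u)=2(\alpha+\beta+2)u^4-(2\beta+3)u^2$, this yields a linear combination of $B\!\left(\beta+\tfrac{5}{2},\alpha+\tfrac{1}{2}\right)$ and $B\!\left(\beta+\tfrac{3}{2},\alpha+\tfrac{1}{2}\right)$ with coefficients $\alpha+\beta+2$ and $-\tfrac{2\beta+3}{2}$, respectively.

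The crux is then the same cancellation that drives Lemma~\ref{lem1}: applying the recurrence $B(z_1+1,z_2)=\frac{z_1}{z_1+z_2}B(z_1,z_2)$ from~\eqref{propbetafun} with $z_1=\beta+\tfrac{3}{2}$ and $z_2=\alpha+\tfrac{1}{2}$ gives $B\!\left(\beta+\tfrac{5}{2},\alpha+\tfrac{1}{2}\right)=\frac{\beta+3/2}{\alpha+\beta+2}\,B\!\left(\beta+\tfrac{3}{2},\alpha+\tfrac{1}{2}\right)$. The factor $\alpha+\beta+2$ then cancels, the first term collapses to $\tfrac{2\beta+3}{2}B\!\left(\beta+\tfrac{3}{2},\alpha+\tfrac{1}{2}\right)$, and the two terms coincide, leaving zero. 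I expect no serious obstacle here: the parity reduction is immediate once one observes that $uq_2$ is odd, and the only point demanding care is the bookkeeping of the shifted beta parameters, so that a single application of~\eqref{propbetafun} produces precisely the coefficient $\tfrac{2\beta+3}{2}$ required for the cancellation.
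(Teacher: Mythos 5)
Your proof is correct and takes essentially the same route as the paper's: a parity argument discards the even part of $p$ (equivalently, the paper notes that $\int_{-1}^{1}\widetilde{w}_{\alpha,\beta}(u)\,uq_2(u)\,du$ and $\int_{-1}^{1}\widetilde{w}_{\alpha,\beta}(u)\,u^3q_2(u)\,du$ vanish by oddness), and the surviving integral $\int_{-1}^{1}\widetilde{w}_{\alpha,\beta}(u)\,u^2q_2(u)\,du$ is reduced via $v=u^2$ to the linear combination $2(\alpha+\beta+2)B\left(\beta+\tfrac{5}{2},\alpha+\tfrac{1}{2}\right)-(2\beta+3)B\left(\beta+\tfrac{3}{2},\alpha+\tfrac{1}{2}\right)$, killed by one application of~\eqref{propbetafun}. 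The only blemish is an immaterial factor of $2$ in your bookkeeping (your stated coefficients correspond to $\int_0^1$ rather than $2\int_0^1$), which has no bearing on the vanishing.
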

\begin{proof}
Since the integral of an odd function over a symmetric domain is equal to zero, we get
 \begin{equation*}
\int_{-1}^{1}\widetilde{w}_{\alpha,\beta}(u) u^3 q_2(u) \, du= \int_{-1}^{1}\widetilde{w}_{\alpha,\beta}(u) u q_2(u) \, du =0.
\end{equation*}
Then, it is sufficient to prove that 
    \begin{equation*}
\int_{-1}^{1}\widetilde{w}_{\alpha,\beta}(u) u^2 q_2(u) \, du =2\int_{0}^{1}\widetilde{w}_{\alpha,\beta}(u) u^2 q_2(u) \, du=0.
\end{equation*}
By~\eqref{newweight} and~\eqref{orth2z} and by using the change of variable $v= u^2$, we get
\begin{eqnarray*}
&&2\int_{0}^{1}\widetilde{w}_{\alpha,\beta}(u) u^2 q_2(u) \, du=2\int_{0}^{1}(1-u^2)^{\alpha-\frac{1}{2}}u^{2\beta+2} \left( 2(\alpha+ \beta+2)u^2-(2\beta+3)\right) \, du\\
    &=& 4(\alpha+ \beta+2)\int_{0}^{1}(1-u^2)^{\alpha-\frac{1}{2}}u^{2\beta+4} \, du -2(2\beta+3) \int_{0}^{1}(1-u^2)^{\alpha-\frac{1}{2}}u^{2\beta+2}\, du\\
    &=&2(\alpha+ \beta+2)\int_{0}^{1}(1-v)^{\alpha-\frac{1}{2}}v^{\beta+\frac{3}{2}} \, dv -(2\beta+3) \int_{0}^{1}(1-v)^{\alpha-\frac{1}{2}}v^{\beta+\frac{1}{2}}\, dv\\
    &=& 2(\alpha+ \beta+2)B\left(\beta+\frac{5}{2},\alpha+\frac{1}{2}\right) -(2\beta+3) B\left(\beta+\frac{3}{2},\alpha+\frac{1}{2}\right).
\end{eqnarray*}
The thesis is a consequence of the property~\eqref{propbetafun}.
\end{proof}

The next result is a direct application of Lemma~\ref{lem1gennew}.

\begin{lemma}\label{adomnnewgennew}
   For any $p\in  \mathbb{P}_2(T)$, the following equality holds 
\begin{equation*}
    \mathcal{L}_{j}^{{\mathrm{enr}}}(p) = 0.
\end{equation*}
\end{lemma}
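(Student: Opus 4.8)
The plan is to reduce the statement to Lemma~\ref{lem1gennew} by the same mechanism used in the proof of Lemma~\ref{adomnnew}. The key observation is that the functional $\mathcal{L}_j^{\mathrm{enr}}$ is, after the change of variable $u=2t-1$, an integral against the weight $\widetilde{w}_{\alpha,\beta}$ of the product $u\,q_2(u)$ times the restriction of $f$ to the edge $\Gamma_j$. Therefore the computation is governed entirely by the polynomial degree of that restriction.

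First I would fix $p\in\mathbb{P}_2(T)$ and $j\in\{1,2,3\}$, and consider the restriction of $p$ to the edge $\Gamma_j$, namely the one-variable function
\begin{equation*}
t\mapsto p\bigl(t\B{v}_{j+1}+(1-t)\B{v}_{j+2}\bigr),\qquad t\in[0,1].
\end{equation*}
By the affinity property~\eqref{prop2}, each barycentric coordinate restricts to an affine function of $t$ on this segment, so any quadratic polynomial in the $\lambda_i$ restricts to a polynomial of degree at most two in $t$. Next I would apply the change of variable $u=2t-1$ so that the segment parametrization becomes $\frac{1+u}{2}\B{v}_{j+1}+\frac{1-u}{2}\B{v}_{j+2}$; under this substitution the restriction of $p$ becomes a polynomial $\widetilde{p}(u)$ of degree at most two in $u$, and the functional takes the form given in the remark preceding the lemma,
\begin{equation*}
\mathcal{L}_j^{\mathrm{enr}}(p)=\frac{1}{2^{2\alpha}}\int_{-1}^{1}\widetilde{w}_{\alpha,\beta}(u)\,u\,q_2(u)\,\widetilde{p}(u)\,du.
\end{equation*}

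Finally I would invoke Lemma~\ref{lem1gennew}: since $\widetilde{p}\in\mathbb{P}_2$ and $u\,q_2$ is orthogonal to all quadratic polynomials with respect to $\widetilde{w}_{\alpha,\beta}$ on $[-1,1]$, the integral vanishes, which gives $\mathcal{L}_j^{\mathrm{enr}}(p)=0$ for every $j$. I do not expect any genuine obstacle here, since the statement is explicitly advertised as a direct consequence of the orthogonality lemma; the only point requiring slight care is confirming that the restriction of an arbitrary element of $\mathbb{P}_2(T)$ is indeed of degree at most two along the edge, which follows immediately from the affinity of the barycentric coordinates, and then simply matching the transformed functional to the orthogonality integral established in Lemma~\ref{lem1gennew}.
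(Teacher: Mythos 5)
Your proposal is correct and follows essentially the same route as the paper: restrict $p$ to the edge $\Gamma_j$, observe via the affinity property~\eqref{prop2} that this restriction is a polynomial of degree at most two in $t$, and conclude by the orthogonality of $uq_2$ established in Lemma~\ref{lem1gennew}. The only difference is cosmetic --- you spell out the change of variable $u=2t-1$ and the transformed form of $\mathcal{L}_j^{\mathrm{enr}}$, which the paper leaves implicit in the preceding remark.
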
 
\begin{proof}
Let $p\in  \mathbb{P}_2(T)$. For each $j=1,2,3,$ its  restriction to $\Gamma_j$
 \begin{equation*}
     p(t\B v_{j+1}+ (1-t)\B v_{j+2}), \quad t\in[0,1], 
 \end{equation*}
is a quadratic polynomial in the variable $t$. The statement now follows trivially from Lemma~\ref{lem1gennew}.
\end{proof}

The next Theorem establishes that the triple  $\mathcal{S}$ defined in~\eqref{tripleS} is a finite element. 
\begin{theorem}\label{th1nnewfin12} 
  For any real numbers $\alpha,\beta>-\frac{1}{2}$ the triple $ \mathcal{S}$ defined in~\eqref{tripleS} is a finite element.
\end{theorem}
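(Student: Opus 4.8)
The plan is to prove unisolvence of $\Sigma_{3,T}^{\mathrm{enr}}$ on $\mathbb{P}_3(T)$. Since $\dim\mathbb{P}_3(T)=10$ equals the number of functionals in $\Sigma_{3,T}^{\mathrm{enr}}$ (three $\mathcal{I}_j^{\mathrm{CR}}$, three $\mathcal{F}_j^{\mathrm{enr}}$, three $\mathcal{L}_j^{\mathrm{enr}}$, and one $\mathcal{J}^{\mathrm{enr}}$), it suffices to show that any $p\in\mathbb{P}_3(T)$ annihilated by every functional vanishes. The organizing observation is that nine of the functionals are \emph{edge} functionals: for fixed $j$, each of $\mathcal{I}_j^{\mathrm{CR}},\mathcal{F}_j^{\mathrm{enr}},\mathcal{L}_j^{\mathrm{enr}}$ depends only on the trace $p|_{\Gamma_j}$, while $\mathcal{J}^{\mathrm{enr}}$ is the sole interior functional. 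I would therefore first pin down each edge trace, then glue the traces through continuity at the vertices, and finally dispatch the interior degree of freedom.

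First I would fix an edge $\Gamma_j$ and write its trace as a univariate cubic $p|_{\Gamma_j}=a_0+a_1u+a_2u^2+a_3u^3$ in the variable $u=2t-1\in[-1,1]$. In the forms~\eqref{ex3b} and its analogue for $\mathcal{L}_j^{\mathrm{enr}}$, the three edge functionals test this trace against $1$, $p_2(u)$ and $u\,q_2(u)$, respectively, which are (up to scaling) the degree $0,2,3$ orthogonal polynomials for the weight $\widetilde{w}_{\alpha,\beta}$. Using Lemma~\ref{lem1} and Lemma~\ref{lem1gennew} together with the even/odd parity of $p_2$, $u\,q_2$ and $\widetilde{w}_{\alpha,\beta}$, the conditions $\mathcal{F}_j^{\mathrm{enr}}(p)=0$ and $\mathcal{L}_j^{\mathrm{enr}}(p)=0$ force $a_2=0$ and $a_3=0$, the relevant diagonal pairings $\int_{-1}^1\widetilde{w}_{\alpha,\beta}\,p_2\,u^2\,du$ and $\int_{-1}^1\widetilde{w}_{\alpha,\beta}\,(u\,q_2)\,u^3\,du$ being nonzero (this reduces to the same beta-function computations already carried out in those lemmas); then $\mathcal{I}_j^{\mathrm{CR}}(p)=0$ gives $a_0=0$. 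Crucially, none of the three functionals detects the odd-linear mode $a_1u$, so the only possible conclusion is $p|_{\Gamma_j}(t)=e_j(2t-1)$ for some scalar $e_j$.

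Next I would impose continuity of $p$ at the vertices. With the cyclic convention, $\Gamma_j$ joins $\B{v}_{j+1}$ (at $t=1$, where the trace equals $e_j$) and $\B{v}_{j+2}$ (at $t=0$, where it equals $-e_j$). Matching the two traces meeting at each vertex yields $-e_2=e_3$, $e_1=-e_3$ and $-e_1=e_2$, whose only solution is $e_1=e_2=e_3=0$. Hence $p$ vanishes on $\partial T$; since $\lambda_i$ cuts out $\Gamma_i$ by~\eqref{propstar}, divisibility forces $p=c\,\lambda_1\lambda_2\lambda_3$ for a constant $c$, this being the unique cubic (up to scale) vanishing on all three edges. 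Applying the last degree of freedom and Lemma~\ref{lem1old} with $d=2$ and $\gamma_0=\gamma_1=\gamma_2=1$ gives $\mathcal{J}^{\mathrm{enr}}(p)=c\int_T\lambda_1\lambda_2\lambda_3\,d\B{x}=\tfrac{c\,|T|}{60}$, which is nonzero unless $c=0$; therefore $p=0$.

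I expect the main obstacle to be the second step: making rigorous that the triple $\{\mathcal{I}_j^{\mathrm{CR}},\mathcal{F}_j^{\mathrm{enr}},\mathcal{L}_j^{\mathrm{enr}}\}$ detects exactly the constant, quadratic and cubic orthogonal components of the edge trace while leaving the linear mode free. This is the conceptual heart of the argument and is precisely what makes the vertex-gluing step indispensable. A more computational alternative would mirror the proof of Theorem~\ref{th1nnewfin1}: decompose $p=q+r$ with $q\in\mathbb{P}_2(T)$ and $r$ in a cubic complement, reduce the $\mathcal{L}_j^{\mathrm{enr}}$ and $\mathcal{J}^{\mathrm{enr}}$ conditions to a linear system, check nonsingularity of its matrix, and then invoke Theorem~\ref{th1nnewfin1} for the quadratic part; however, selecting a complement whose leading parts are genuinely independent modulo $\mathbb{P}_2(T)$ requires care, so I would favor the trace-based argument above.
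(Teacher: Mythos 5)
Your proof is correct, but it follows a genuinely different route from the paper's. The paper argues algebraically: it decomposes $p = p_2 + a\lambda_2\lambda_3^2 + b\lambda_3\lambda_1^2 + c\lambda_1\lambda_2^2 + d\lambda_1\lambda_2\lambda_3$ with $p_2\in\mathbb{P}_2(T)$, observes via Lemma~\ref{adomnnewgennew} and~\eqref{propstar} that the conditions $\mathcal{L}_j^{\mathrm{enr}}(p)=0$ collapse to a diagonal system whose entry is the explicitly computed constant $G\neq 0$ of~\eqref{kprimo}, concludes $a=b=c=0$, then notes that $\mathcal{I}_j^{\mathrm{CR}}$ and $\mathcal{F}_j^{\mathrm{enr}}$ annihilate the bubble $\lambda_1\lambda_2\lambda_3$ and invokes Theorem~\ref{th1nnewfin1} to force $p_2=0$, before $\mathcal{J}^{\mathrm{enr}}$ eliminates $d$. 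You instead work trace by trace: orthogonality (Lemmas~\ref{lem1} and~\ref{lem1gennew}) plus parity reduces each edge restriction to an odd linear mode $e_j(2t-1)$, vertex continuity forces $e_1=e_2=e_3=0$, divisibility yields $p=c\,\lambda_1\lambda_2\lambda_3$, and $\mathcal{J}^{\mathrm{enr}}$ finishes exactly as in the paper. Your route is self-contained---it bypasses both Theorem~\ref{th1nnewfin1} and the unisolvence of the classical CR element, on which the paper's chain of reductions ultimately rests---and it isolates the structural reason the element works: the three edge functionals control precisely the degree $0$, $2$, $3$ components of each trace, leaving a linear mode that only the vertex-gluing can remove. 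What the paper's computational route buys in exchange is the explicit value of $G$, which it reuses in Theorem~\ref{th2allalf1} to write down the basis functions. One small repair to your step two: the nonvanishing of the diagonal pairings deserves a precise argument, and there is a one-line one that avoids redoing beta-function integrals: since $p_2(u)=2(\alpha+\beta+1)u^2-(2\beta+1)$ and $p_2\perp 1$ by Lemma~\ref{lem1}, one has $\int_{-1}^{1}\widetilde{w}_{\alpha,\beta}(u)\,p_2(u)\,u^2\,du=\frac{1}{2(\alpha+\beta+1)}\int_{-1}^{1}\widetilde{w}_{\alpha,\beta}(u)\,p_2^2(u)\,du>0$, and analogously $\int_{-1}^{1}\widetilde{w}_{\alpha,\beta}(u)\,u\,q_2(u)\,u^3\,du=\frac{1}{2(\alpha+\beta+2)}\int_{-1}^{1}\widetilde{w}_{\alpha,\beta}(u)\,\bigl(u\,q_2(u)\bigr)^2\,du>0$ using $u\,q_2\perp u$ from Lemma~\ref{lem1gennew}. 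Finally, your parenthetical describing $\mathcal{I}_j^{\mathrm{CR}}$ as testing against the degree-$0$ orthogonal polynomial \emph{for the weight} $\widetilde{w}_{\alpha,\beta}$ is slightly off---that functional is unweighted, cf.~\eqref{lev}---but this is harmless in your argument, since you apply it only after $a_2=a_3=0$, where parity disposes of the linear term.
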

\begin{proof}
 Let  $p \in  \mathbb{P}_3(T)$ such that 
\begin{eqnarray}
  \mathcal{I}_j^{\mathrm{CR}}(p) &=& 0, \qquad j=1,2,3, \label{ho11}\\
  \mathcal{F}_{j}^{\mathrm{enr}}(p) &=& 0, \qquad j=1,2,3,\label{ho2} \\ 
  \mathcal{L}_{j}^{\mathrm{enr}}(p) &=& 0, \qquad j=1,2,3,\label{ho3}\\ 
\mathcal{J}^{{\mathrm{enr}}}(p) &=& 0. \label{ho4}
\end{eqnarray}
We must prove that $p=0$. Since  $p\in \mathbb{P}_3(T)$, it can be expressed as   
\begin{equation}\label{polsecpart}
  p= p_2+ a\lambda_2\lambda_3^2+ b \lambda_3\lambda_1^2+c\lambda_1\lambda_2^2 + d\lambda_1\lambda_2\lambda_3,
\end{equation}
with  $p_2 \in  \mathbb{P}_2(T)$ and $a,b,c,d \in \mathbb{R}.$ Considering~\eqref{propstar} and Lemma~\ref{adomnnewgennew}, we obtain
\begin{eqnarray}
0 &=&\mathcal{L}^{\mathrm{enr}}_{1}(p) = a \mathcal{L}^{\mathrm{enr}}_{1}(\lambda_2\lambda_3^2) ,\label{genn1new1L} \\ 
0&=& \mathcal{L}^{\mathrm{enr}}_{2}(p) = b \mathcal{L}^{\mathrm{enr}}_{2}(\lambda_3\lambda_1^2),\label{genn2new2L} \\ 
0&=&\mathcal{L}^{\mathrm{enr}}_{3}(p) =  c\mathcal{L}^{\mathrm{enr}}_{3}(\lambda_1\lambda_2^2).\label{genn3new3L} 
\end{eqnarray}
By employing properties~\eqref{bary} and~\eqref{prop2}, along with straightforward computations, we derive the following equalities
\begin{equation*}
\mathcal{L}^{\mathrm{enr}}_{1}(\lambda_2\lambda_3^2)=\mathcal{L}^{\mathrm{enr}}_{2}(\lambda_3\lambda_1^2)=\mathcal{L}^{\mathrm{enr}}_{3}(\lambda_1\lambda_2^2)=:G\neq 0, 
\end{equation*}
where
\begin{equation}\label{kprimo}
  G=\frac{2\beta +3}{4(\alpha + \beta + 3)} K
\end{equation}
and $K$ is the constant defined in~\eqref{denot}. Therefore, we can rewrite~\eqref{genn1new1L}, \eqref{genn2new2L} and~\eqref{genn3new3L} in matrix form as follows
\begin{equation*}
    \begin{bmatrix}
G &  0 & 0\\
0 & G & 0 \\
0 & 0 & G
\end{bmatrix}
 \begin{bmatrix}
a\\
b\\
c
\end{bmatrix}
= \begin{bmatrix}
0\\
0\\
0
\end{bmatrix}.
 \end{equation*}
The matrix associated with the linear system has a nonzero determinant. Therefore, we conclude that $a=b=c=0$, and as a consequence, 
\begin{equation}\label{eeeee}
p=p_2+d\lambda_1\lambda_2\lambda_3.
\end{equation}
Since, by~\eqref{propstar}, we have
\begin{equation*} \mathcal{I}^{\mathrm{CR}}_j(\lambda_1\lambda_2\lambda_3)=\mathcal{F}^{\mathrm{enr}}_{j}(\lambda_1\lambda_2\lambda_3)=0, \qquad j=1,2,3,
\end{equation*}
conditions~\eqref{ho11} and~\eqref{ho2} can be expressed as
\begin{equation*}
0=\mathcal{I}^{\mathrm{CR}}_j(p)=\mathcal{I}^{\mathrm{CR}}_j(p_2) \qquad j=1,2,3,
\end{equation*}
\begin{equation*}
   0=\mathcal{F}^{\mathrm{enr}}_{j}(p)=\mathcal{F}^{\mathrm{enr}}_{j}(p_2), \qquad j=1,2,3.
\end{equation*}
According to Theorem~\ref{th1nnewfin1}, we can infer that $p_2=0$, and consequently,~\eqref{eeeee} is reduced to
\begin{equation*}
    p=d\lambda_1\lambda_2\lambda_3. 
\end{equation*}
Finally, by using~\eqref{idbc} of Lemma~\ref{lem1old}, we have
\begin{equation*}
    0=\mathcal{J}^{{\mathrm{enr}}}(p)=d\mathcal{J}^{{\mathrm{enr}}}(\lambda_1\lambda_2\lambda_3)=d  \frac{\left\lvert T \right\rvert}{60},
\end{equation*}
and then $d=0$. 
The theorem is proved. 
\end{proof}

\begin{remark}
    From the previous theorem, it can be observed that the nonsingularity of the matrix 
    \begin{equation*}
      \begin{bmatrix}
\mathcal{L}_{1}^{\mathrm{enr}}(\lambda_2\lambda_3^2) &  \mathcal{L}_{1}^{\mathrm{enr}}(\lambda_3\lambda_1^2) & \mathcal{L}_{1}^{\mathrm{enr}}(\lambda_1\lambda_2^2)\\
\mathcal{L}_{2}^{\mathrm{enr}}(\lambda_2\lambda_3^2) &  \mathcal{L}_{2}^{\mathrm{enr}}(\lambda_3\lambda_1^2) & \mathcal{L}_{2}^{\mathrm{enr}}(\lambda_1\lambda_2^2)\\
\mathcal{L}_{3}^{\mathrm{enr}}(\lambda_2\lambda_3^2) &  \mathcal{L}_{3}^{\mathrm{enr}}(\lambda_3\lambda_1^2) & \mathcal{L}_{3}^{\mathrm{enr}}(\lambda_1\lambda_2^2)\\
\end{bmatrix}=
     \begin{bmatrix}
G &  0 & 0\\
0 & G& 0 \\
0 &0 &G
\end{bmatrix}
\end{equation*}
    is a necessary and sufficient condition for $\mathcal{S}$ to be a finite element.
\end{remark}

As a direct consequence of Theorem~\ref{th1nnewfin12}, we can establish the existence of a basis
\begin{equation*}
    \mathcal{B}_3=\{\mu_i, \eta_i, \zeta_i, \xi\, :\, i=1,2,3\}
\end{equation*}
of the polynomial space $\mathbb{P}_3(T)$ satisfying 
the following conditions
\begin{eqnarray}
    & &\mathcal{I}_j^{\mathrm{CR}}(\mu_i) = \delta_{ij}, \quad \mathcal{F}^{\mathrm{enr}}_j(\mu_i) = 0, \quad \mathcal{L}^{\mathrm{enr}}_j(\mu_i)=0, \quad \mathcal{J}^{\mathrm{enr}}(\mu_i)=0, \quad i,j=1,2,3, \label{cond1}\\
      & &\mathcal{I}_j^{\mathrm{CR}}(\eta_i) = 0, \quad \mathcal{F}^{\mathrm{enr}}_j(\eta_i) = \delta_{ij}, \quad \mathcal{L}^{\mathrm{enr}}_j(\eta_i)=0, \quad \mathcal{J}^{\mathrm{enr}}(\eta_i)=0, \quad i,j=1,2,3, \label{cond2}\\
        & &\mathcal{I}_j^{\mathrm{CR}}(\zeta_i) = 0, \quad \mathcal{F}^{\mathrm{enr}}_j(\zeta_i) = 0, \quad \mathcal{L}^{\mathrm{enr}}_j(\zeta_i)=\delta_{ij}, \quad \mathcal{J}^{\mathrm{enr}}(\zeta_i)=0, \quad i,j=1,2,3, \label{cond3}\\
          & &\mathcal{I}_j^{\mathrm{CR}}(\xi) = 0, \quad \mathcal{F}^{\mathrm{enr}}_j(\xi) = 0, \quad \mathcal{L}^{\mathrm{enr}}_j(\xi)=0, \quad \mathcal{J}^{\mathrm{enr}}(\xi)=1, \quad j=1,2,3, \label{cond4}
\end{eqnarray}
where $\delta_{ij}$ represents the Kronecker delta symbol. These functions are commonly referred to as the basis functions of the enriched finite element $\mathcal{S}$.
In the next theorem, we give simple, closed-form expressions of the basis functions of the enriched finite element $ \mathcal{S}.$ For this purpose, we introduce the following functions
      \begin{equation*}
e_1:=\lambda_2\lambda_3^2,\qquad e_2:=\lambda_3\lambda_1^2, \qquad e_3:=\lambda_1\lambda_2^2. 
\end{equation*}
\begin{theorem}\label{th2allalf1}
    The basis functions of the enriched finite element $ \mathcal{S}$ have the
    following expressions
\begin{eqnarray}
\mu_i&=&\varphi_i-20\lambda_1\lambda_2\lambda_3,  \qquad i=1,2,3 \label{mui}\\
    \eta_i&=& \phi_i+\frac{5}{3K}\lambda_1\lambda_2\lambda_3, \qquad i=1,2,3, \label{etai}\\
    \zeta_i&=& -\frac{\varphi_i}{12 G}+\frac{K \phi_i}{2G}+\frac{e_i}{G}+ \frac{\lambda_1\lambda_2\lambda_3}{2G}, \qquad i=1,2,3 \label{zetai}
    \\
    \xi&=&\frac{60}{\left\lvert T \right\rvert}\lambda_1\lambda_2\lambda_3.
    \label{zeta}
\end{eqnarray}
Here, $\varphi_i$, $\phi_i$, $K$ and $G$ are defined in~\eqref{varphi},~\eqref{phi},~\eqref{denot} and~\eqref{kprimo}, respectively.      
      \end{theorem}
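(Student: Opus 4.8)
The plan is to rely on the unisolvence proved in Theorem~\ref{th1nnewfin12}: since the element $\mathcal{S}$ is unisolvent, the conditions~\eqref{cond1}--\eqref{cond4} determine the four families of basis functions uniquely. Hence it suffices to check that the proposed closed forms~\eqref{mui}--\eqref{zeta} satisfy these defining relations; by linearity of the four functionals, everything reduces to evaluating $\mathcal{I}_j^{\mathrm{CR}}$, $\mathcal{F}_j^{\mathrm{enr}}$, $\mathcal{L}_j^{\mathrm{enr}}$ and $\mathcal{J}^{\mathrm{enr}}$ on the building blocks $\varphi_i$, $\phi_i$, $e_i$ and $\lambda_1\lambda_2\lambda_3$.

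First I would collect the elementary evaluations. On $\varphi_i$ and $\phi_i$ the functionals $\mathcal{I}_j^{\mathrm{CR}}$ and $\mathcal{F}_j^{\mathrm{enr}}$ act as recorded in~\eqref{propvarphi}--\eqref{propphi}, while $\mathcal{L}_j^{\mathrm{enr}}(\varphi_i)=\mathcal{L}_j^{\mathrm{enr}}(\phi_i)=0$ because $\varphi_i\in\mathbb{P}_1(T)$ and $\phi_i\in\mathbb{P}_2(T)$ (Lemma~\ref{adomnnewgennew}). Since $\lambda_1\lambda_2\lambda_3$ vanishes on $\partial T$ by~\eqref{propstar}, it is annihilated by $\mathcal{I}_j^{\mathrm{CR}}$, $\mathcal{F}_j^{\mathrm{enr}}$ and $\mathcal{L}_j^{\mathrm{enr}}$, whereas~\eqref{idbc} gives $\mathcal{J}^{\mathrm{enr}}(\lambda_1\lambda_2\lambda_3)=|T|/60$. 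Each $e_i$ vanishes on two of the three edges, so the edge functionals reduce to a single one-dimensional integral; a short computation gives $\mathcal{I}_j^{\mathrm{CR}}(e_i)=\tfrac{1}{12}\delta_{ij}$, the value $\mathcal{L}_j^{\mathrm{enr}}(e_i)=G\delta_{ij}$ already appearing in the proof of Theorem~\ref{th1nnewfin12}, and~\eqref{idbc} yields $\mathcal{J}^{\mathrm{enr}}(e_i)=|T|/30$. Finally,~\eqref{idbc} supplies the remaining volume moments $\mathcal{J}^{\mathrm{enr}}(\varphi_i)=|T|/3$ and $\mathcal{J}^{\mathrm{enr}}(\phi_i)=-|T|/(36K)$.

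With this table in hand, each family is a straightforward bookkeeping check. For $\mu_i$ and $\eta_i$ the only nontrivial requirement is $\mathcal{J}^{\mathrm{enr}}=0$, and the coefficients $-20$ and $5/(3K)$ are exactly those needed to cancel $\mathcal{J}^{\mathrm{enr}}(\varphi_i)$, respectively $\mathcal{J}^{\mathrm{enr}}(\phi_i)$, against $\mathcal{J}^{\mathrm{enr}}(\lambda_1\lambda_2\lambda_3)$; all other conditions follow at once from the table. For $\zeta_i$ the term $e_i/G$ produces $\mathcal{L}_j^{\mathrm{enr}}(\zeta_i)=\delta_{ij}$, and the corrections $-\varphi_i/(12G)$, $K\phi_i/(2G)$ and $\lambda_1\lambda_2\lambda_3/(2G)$ are tuned to cancel the spurious contributions of $e_i/G$ to $\mathcal{I}_j^{\mathrm{CR}}$, $\mathcal{F}_j^{\mathrm{enr}}$ and $\mathcal{J}^{\mathrm{enr}}$, respectively; the relation for $\xi$ is immediate from $\mathcal{J}^{\mathrm{enr}}(\lambda_1\lambda_2\lambda_3)=|T|/60$. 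It then only remains to confirm the numerical identities, such as $-\tfrac{1}{36}-\tfrac{1}{72}+\tfrac{1}{30}+\tfrac{1}{120}=0$ underlying $\mathcal{J}^{\mathrm{enr}}(\zeta_i)=0$.

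The one genuinely nonroutine value is the diagonal edge evaluation $\mathcal{F}_i^{\mathrm{enr}}(e_i)$, which controls whether the $K\phi_i/(2G)$ correction cancels $\mathcal{F}_j^{\mathrm{enr}}(e_i/G)$. The clean route is to restrict $e_i$ to its nonvanishing edge, pass to the symmetric variable $u=2t-1$ as in~\eqref{ex3b}, and write the restriction as $\tfrac{1}{8}(1-u-u^2+u^3)$: against the even weight $\widetilde{w}_{\alpha,\beta}p_2$ the odd part integrates to zero, the constant part vanishes by the orthogonality of $p_2$ (Lemma~\ref{lem1}), and the surviving $-u^2/8$ moment is identified with $K$ via the value $\mathcal{F}_j^{\mathrm{enr}}(\lambda_k^2)=K$ ($k\neq j$) established in the proof of Theorem~\ref{th1nnewfin1}, giving $\mathcal{F}_i^{\mathrm{enr}}(e_i)=-K/2$. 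Once this is in place, the cancellation $\tfrac{K}{2G}+\tfrac1G\bigl(-\tfrac{K}{2}\bigr)=0$ closes the last case and completes the verification.
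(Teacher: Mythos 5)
Your proposal is correct, and all the numerical values in your table check out: $\mathcal{I}_j^{\mathrm{CR}}(e_i)=\tfrac{1}{12}\delta_{ij}$, $\mathcal{J}^{\mathrm{enr}}(e_i)=|T|/30$, $\mathcal{J}^{\mathrm{enr}}(\varphi_i)=|T|/3$, $\mathcal{J}^{\mathrm{enr}}(\phi_i)=-|T|/(36K)$, and in particular your key evaluation $\mathcal{F}_i^{\mathrm{enr}}(e_i)=-K/2$, obtained by writing the edge restriction as $\tfrac18(1-u-u^2+u^3)$ and comparing the surviving $u^2$-moment with the one behind $\mathcal{F}_j^{\mathrm{enr}}(\lambda_k^2)=K$ (the restriction of $\lambda_{j+1}^2$ contributes $u^2/4$, giving exactly the ratio $-1/2$). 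The closing identity $-\tfrac{1}{36}-\tfrac{1}{72}+\tfrac{1}{30}+\tfrac{1}{120}=0$ is also right.

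However, your route is genuinely different from the paper's. The paper \emph{derives} each basis function: it expands the unknown polynomial as $p_2+a\,e_1+b\,e_2+c\,e_3+m\,\lambda_1\lambda_2\lambda_3$ with $p_2\in\mathbb{P}_2(T)$, uses the diagonal system $\mathcal{L}_j^{\mathrm{enr}}(e_i)=G\delta_{ij}$ to pin down $a,b,c$, reduces the remaining edge conditions to the quadratic-level problem already solved in Theorem~\ref{th2allalf} (which identifies $p_2$ as a combination of $\varphi_i$ and $\phi_i$), and finally fixes the bubble coefficient $m$ from $\mathcal{J}^{\mathrm{enr}}$. You instead \emph{verify} the stated closed forms against the defining conditions \eqref{cond1}--\eqref{cond4} and invoke uniqueness, which does follow from the unisolvence in Theorem~\ref{th1nnewfin12} (the difference of two candidates is annihilated by all functionals in $\Sigma_{3,T}^{\mathrm{enr}}$, hence vanishes), so the logic is sound. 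What your approach buys is transparency and economy: everything reduces to a single table of functional evaluations, and you make fully explicit the value $\mathcal{F}_i^{\mathrm{enr}}(e_i)=-K/2$, which the paper hides inside the phrase ``through straightforward computations'' when determining $p_2$ in the $\zeta_1$ case. What it loses is the constructive content: the paper's argument shows how the formulas are found rather than presupposing them, and its reduction to Theorem~\ref{th2allalf} reuses the quadratic element's duality wholesale instead of re-checking the edge conditions term by term.
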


\begin{proof}
   Without loss on generality, we prove the theorem for $i=1$. 

Since $\mu_1\in \mathbb{P}_3(T)$, it can be expressed as
\begin{equation}\label{mu1eqth}
\mu_1=p_2+a_{1,1}\lambda_2\lambda_3^2+ a_{1,2} \lambda_3\lambda_1^2+a_{1,3}\lambda_1\lambda_2^2 + m_1\lambda_1\lambda_2\lambda_3, 
\end{equation}
where $p_2\in \mathbb{P}_2(T)$ and $a_{1,1}, a_{1,2},a_{1,3}, m_1\in \mathbb{R}$. Applying the linear functional $\mathcal{L}^{\mathrm{enr}}_j$, $j=1,2,3$, to both sides of~\eqref{mu1eqth} and leveraging Lemma~\ref{adomnnewgennew} along with~\eqref{propstar} and~\eqref{cond1}, we derive the linear system
\begin{eqnarray*}
    0=\mathcal{L}_{1}^{\mathrm{enr}}(\mu_1)&=&a_{1,1}G\\
    0=\mathcal{L}_{2}^{\mathrm{enr}}(\mu_1)&=&a_{1,2}G\\
    0=\mathcal{L}_{3}^{\mathrm{enr}}(\mu_1)&=&a_{1,3}G,
\end{eqnarray*}
where $G$ is defined in~\eqref{kprimo}.
Then, we conclude that $a_{1,1}=a_{1,2}=a_{1,3}=0$. Substituting this into~\eqref{mu1eqth}, we get
\begin{equation*}
\mu_1=p_2+m_1\lambda_1\lambda_2\lambda_3.
\end{equation*}
Since, by~\eqref{propstar}, we have
\begin{equation*} \mathcal{I}^{\mathrm{CR}}_j(\lambda_1\lambda_2\lambda_3)=\mathcal{F}^{\mathrm{enr}}_{j}(\lambda_1\lambda_2\lambda_3)=0, \qquad j=1,2,3,
\end{equation*}
the problem reduces to finding a polynomial $p_2\in \mathbb{P}_2(T)$ such that
\begin{equation*}
\mathcal{I}^{\mathrm{CR}}_j(p_2)=\delta_{1j} \qquad \mathcal{F}^{\mathrm{enr}}_{j}(p_2)=0, \qquad j=1,2,3.
\end{equation*}
By Theorem~\ref{th2allalf}, we get
\begin{equation*}  
\mu_1=1-2\lambda_1+m_1\lambda_1\lambda_2\lambda_3=\varphi_1+m_1\lambda_1\lambda_2\lambda_3.
\end{equation*}
To determine the coefficient $m_1$, we use the condition $\mathcal{J}^{\mathrm{enr}}(\mu_1)=0$. Then, by using~\eqref{idbc} of Lemma~\ref{lem1old}, we have
\begin{equation*}   0=\mathcal{J}^{\mathrm{enr}}(\mu_1)=\left\lvert T\right\rvert \left(\frac{20+m_1}{60}\right).
\end{equation*}
Consequently $m_{1}=-20$.

Now we prove the expression of $\eta_1$. Since $\eta_1\in \mathbb{P}_3(T)$, it can be expressed as 
\begin{equation}\label{eta1funth}
\eta_1=p_2+b_{1,1}\lambda_2\lambda_3^2+ b_{1,2} \lambda_3\lambda_1^2+b_{1,3}\lambda_1\lambda_2^2 + m_2\lambda_1\lambda_2\lambda_3,
\end{equation}
where $p_2\in \mathbb{P}_2(T)$ and $b_{1,1},b_{1,2},b_{1,3}, m_2\in \mathbb{R}$.
Applying the linear functional $\mathcal{L}^{\mathrm{enr}}_j$, $j=1,2,3$, to both sides of~\eqref{eta1funth} and leveraging Lemma~\ref{adomnnewgennew} along with~\eqref{propstar} and~\eqref{cond1}, we derive the linear system
\begin{eqnarray*}
    0=\mathcal{L}_{1}^{\mathrm{enr}}(\eta_1)&=&b_{1,1}G\\
    0=\mathcal{L}_{2}^{\mathrm{enr}}(\eta_1)&=&b_{1,2}G\\
    0=\mathcal{L}_{3}^{\mathrm{enr}}(\eta_1)&=&b_{1,3}G,
\end{eqnarray*}
where $G$ is defined in~\eqref{kprimo}.
Then, we conclude that $b_{1,1}=b_{1,2}=b_{1,3}=0$. Substituting this into~\eqref{eta1funth}, we get
\begin{equation*}
\eta_1=p_2+m_2\lambda_1\lambda_2\lambda_3.
\end{equation*}
Since, by~\eqref{propstar}, we have
\begin{equation*} \mathcal{I}^{\mathrm{CR}}_j(\lambda_1\lambda_2\lambda_3)=\mathcal{F}^{\mathrm{enr}}_{j}(\lambda_1\lambda_2\lambda_3)=0, \qquad j=1,2,3,
\end{equation*}
the problem reduces to finding a polynomial $p_2\in \mathbb{P}_2(T)$ such that
\begin{equation*}
\mathcal{I}^{\mathrm{CR}}_j(p_2)=0 \qquad \mathcal{F}^{\mathrm{enr}}_{j}(p_2)=\delta_{1j}, \qquad j=1,2,3.
\end{equation*}
By Theorem~\ref{th2allalf}, we get
\begin{equation*}  
\eta_1=\frac{2\lambda_1-1}{3K}+\frac{1}{2K} (-\lambda_{1}^2 +\lambda_{2}^2+\lambda_{3}^2)+m_2\lambda_1\lambda_2\lambda_3=\phi_1+m_2\lambda_1\lambda_2\lambda_3.
\end{equation*}
To determine the coefficient $m_2$, we use the condition $\mathcal{J}^{\mathrm{enr}}(\eta_1)=0$. Then, by using~\eqref{idbc} of Lemma~\ref{lem1old}, we have
\begin{equation*}    0=\mathcal{J}^{\mathrm{enr}}(\eta_1)=\left\lvert T \right\rvert\left(-\frac{1}{36K}+\frac{m_2}{60}\right),
\end{equation*}
and therefore
\begin{equation*}
m_2=\frac{5}{3K}.    
\end{equation*}

Now we prove the expression of $\zeta_1$. Since $\zeta_1\in \mathbb{P}_3(T)$, it can be expressed as 
\begin{equation}\label{zetafth}
\zeta_1=p_2+c_{1,1}\lambda_2\lambda_3^2+ c_{1,2} \lambda_3\lambda_1^2+c_{1,3}\lambda_1\lambda_2^2 + m_3\lambda_1\lambda_2\lambda_3,
\end{equation}
where $p_2\in \mathbb{P}_2(T)$ and $c_{1,1}, c_{1,2},c_{1,3}, m_3\in \mathbb{R}$. 
Applying the linear functional $\mathcal{L}^{\mathrm{enr}}_j$, $j=1,2,3$, to both sides of~\eqref{mu1eqth} and leveraging Lemma~\ref{adomnnewgennew} along with~\eqref{propstar} and~\eqref{cond1}, we derive the linear system
\begin{eqnarray*}
    1=\mathcal{L}_{1}^{\mathrm{enr}}(\zeta_1)&=&c_{1,1}G\\
    0=\mathcal{L}_{2}^{\mathrm{enr}}(\zeta_1)&=&c_{1,2}G\\
    0=\mathcal{L}_{3}^{\mathrm{enr}}(\zeta_1)&=&c_{1,3}G,
\end{eqnarray*}
where $G$ is defined in~\eqref{kprimo}.
Then, we conclude that 
\begin{equation*}
    c_{1,1}=\frac{1}{G}, \quad c_{1,2}=c_{1,3}=0.
\end{equation*}
Substituting this into~\eqref{zetafth}, we get
\begin{equation*}
\zeta_1=p_2+ \frac{1}{G}\lambda_2\lambda_3^2+m_3\lambda_1\lambda_2\lambda_3.
\end{equation*}
Through straightforward computations, it becomes evident that the conditions
\begin{equation*}
\mathcal{I}^{\mathrm{CR}}_j(\zeta_1)=\mathcal{F}^{\mathrm{enr}}_j(\zeta_1)=0, \qquad j=1,2,3,
\end{equation*}
imply that
\begin{equation*}
    p_2=-\frac{\varphi_1}{12 G}+\phi_1\frac{K}{2G}.
\end{equation*}
Since $\mathcal{J}^{\mathrm{enr}}(\zeta_1)=0$, by~\eqref{idbc}, we obtain
\begin{eqnarray*}
    0&=&\mathcal{J}^{\mathrm{enr}}(\zeta_1)=\frac{\left\lvert T \right\rvert}{4G}\left(\frac{-2+4Gm_3}{60}\right),
\end{eqnarray*}
and therefore 
\begin{equation*}
    m_3=\frac{1}{2G}.
\end{equation*}

It remains to prove the expression of $\xi$. Since $\xi\in \mathbb{P}_3(T)$, it can be expressed as 
\begin{equation}\label{xilastf}
\xi=p_2+d_{1,1}\lambda_2\lambda_3^2+ d_{1,2} \lambda_3\lambda_1^2+d_{1,3}\lambda_1\lambda_2^2 + m_4\lambda_1\lambda_2\lambda_3,
\end{equation}
where $p_2\in \mathbb{P}_2(T)$ and $d_{1,1}, d_{1,2},d_{1,3}, m_4\in \mathbb{R}$. 
Applying the linear functional $\mathcal{L}^{\mathrm{enr}}_j$, $j=1,2,3$, to both sides of~\eqref{xilastf} and leveraging Lemma~\ref{adomnnewgennew} along with~\eqref{propstar} and~\eqref{cond1}, we derive the linear system
\begin{eqnarray*}
   0=\mathcal{L}_{1}^{\mathrm{enr}}(\xi)&=&d_{1,1}G\\
    0=\mathcal{L}_{2}^{\mathrm{enr}}(\xi)&=&d_{1,2}G\\
    0=\mathcal{L}_{3}^{\mathrm{enr}}(\xi)&=&d_{1,3}G,
\end{eqnarray*}
where $G$ is defined in~\eqref{kprimo}.
Then $d_{1,1}=d_{1,2}=d_{1,3}=0$. Substituting this into~\eqref{xilastf}, we get
\begin{equation*}
    \xi=p_2+m_4 \lambda_1\lambda_2\lambda_3.
\end{equation*}
Since
\begin{equation*} \mathcal{I}^{\mathrm{CR}}_j(\lambda_1\lambda_2\lambda_3)=\mathcal{F}^{\mathrm{enr}}_{j}(\lambda_1\lambda_2\lambda_3)=0, \qquad j=1,2,3,
\end{equation*}
the problem reduces to finding a polynomial $p_2\in \mathbb{P}_2(T)$ such that
\begin{equation*}
\mathcal{I}^{\mathrm{CR}}_j(p_2)=0 \qquad \mathcal{F}^{\mathrm{enr}}_{j}(p_2)=0, \qquad j=1,2,3.
\end{equation*}
By Theorem~\ref{th2allalf}, we get $p_2=0$, and then
\begin{equation*}
\xi=m_4\lambda_1\lambda_2\lambda_3.
\end{equation*}
To determine the parameter $m_4$, we use $\mathcal{J}^{\mathrm{enr}}(\xi)=1$. Then, by using~\eqref{idbc} of Lemma~\ref{lem1old}, we have
\begin{equation*}
    1=\mathcal{J}^{\mathrm{enr}}(\xi)=m_4\frac{\left\lvert T \right\rvert}{60},
\end{equation*}
and therefore
\begin{equation*}
m_4=\frac{60}{\left\lvert T \right\rvert}.    
\end{equation*}
Analogously, the theorem can be proven for $i=2$ and $i=3$, and thus, the thesis follows.
\end{proof}  

\begin{theorem}
 The approximation operator relative to the enriched finite element $\mathcal{S}$
\begin{equation}\label{pilinch9}
\begin{array}{rcl}
{\Pi}_3^{{\mathrm{enr}}}: C(T) &\rightarrow& \mathbb{P}_3(T)
\\
f &\mapsto& \displaystyle{\sum_{j=1}^{3} \mathcal{I}^{\mathrm{CR}}_j(f)\mu_j + \sum_{j=1}^{3}\mathcal{F}^{\mathrm{enr}}_j(f)\eta_j+ \sum_{j=1}^{3}\mathcal{L}^{\mathrm{enr}}_j(f)\zeta_j}+ \mathcal{J}^{\mathrm{enr}}(f)\xi,
\end{array}
\end{equation}
reproduces all polynomials of $\mathbb{P}_3(T)$ and satisfies 
\begin{eqnarray*}
\mathcal{I}^{\mathrm{CR}}_j\left({\Pi}_3^{\mathrm{enr}}[f]\right)&=&\mathcal{I}^{\mathrm{CR}}_j(f), \qquad j=1,2,3, \\ \mathcal{F}^{\mathrm{enr}}_j\left({\Pi}_3^{\mathrm{enr}}[f]\right)&=&\mathcal{F}^{\mathrm{enr}}_j(f), \qquad j=1,2,3,\\ \mathcal{L}^{\mathrm{enr}}_j\left({\Pi}_3^{\mathrm{enr}}[f]\right)&=&\mathcal{L}^{\mathrm{enr}}_j(f), \qquad j=1,2,3,\\
\mathcal{J}^{\mathrm{enr}}\left({\Pi}_3^{\mathrm{enr}}[f]\right)&=&\mathcal{J}^{\mathrm{enr}}(f).
\end{eqnarray*} 
\end{theorem}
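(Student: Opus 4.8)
The plan is to observe that the entire statement follows from the biorthogonality relations~\eqref{cond1}--\eqref{cond4} characterizing the basis $\mathcal{B}_3$, together with the linearity of the ten functionals in $\Sigma_{3,T}^{\mathrm{enr}}$ and the unisolvence established in Theorem~\ref{th1nnewfin12}. Indeed, the operator~\eqref{pilinch9} is precisely the projection whose dual basis is $\mathcal{B}_3$, so proving the interpolation identities amounts to a formal bookkeeping of Kronecker deltas, and polynomial reproduction is then immediate from unisolvence.

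First I would verify the four families of interpolation identities. Fix $k\in\{1,2,3\}$ and apply $\mathcal{I}_k^{\mathrm{CR}}$ to both sides of~\eqref{pilinch9}. Since $\mathcal{I}_k^{\mathrm{CR}}$ is linear, it passes through the finite sums and acts term by term on the basis functions. Using $\mathcal{I}_k^{\mathrm{CR}}(\mu_j)=\delta_{kj}$ from~\eqref{cond1}, together with $\mathcal{I}_k^{\mathrm{CR}}(\eta_j)=0$, $\mathcal{I}_k^{\mathrm{CR}}(\zeta_j)=0$, and $\mathcal{I}_k^{\mathrm{CR}}(\xi)=0$ from~\eqref{cond2}--\eqref{cond4}, every contribution vanishes except $\sum_{j}\mathcal{I}_j^{\mathrm{CR}}(f)\delta_{kj}=\mathcal{I}_k^{\mathrm{CR}}(f)$. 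The identical computation, now selecting the appropriate column of the duality table, yields the identities for $\mathcal{F}_k^{\mathrm{enr}}$, $\mathcal{L}_k^{\mathrm{enr}}$, and $\mathcal{J}^{\mathrm{enr}}$; in the last case one only needs that $\mathcal{J}^{\mathrm{enr}}$, being an integral, is a linear functional and that~\eqref{cond1}--\eqref{cond4} give $\mathcal{J}^{\mathrm{enr}}(\mu_j)=\mathcal{J}^{\mathrm{enr}}(\eta_j)=\mathcal{J}^{\mathrm{enr}}(\zeta_j)=0$ and $\mathcal{J}^{\mathrm{enr}}(\xi)=1$.

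For polynomial reproduction, I would argue by unisolvence. Let $p\in\mathbb{P}_3(T)$. By Theorem~\ref{th1nnewfin12} the ten functionals of $\Sigma_{3,T}^{\mathrm{enr}}$ are linearly independent, and since their number equals $\dim\mathbb{P}_3(T)$ the dual basis $\{\mu_i,\eta_i,\zeta_i,\xi\}$ is a basis of $\mathbb{P}_3(T)$; hence $p$ admits a unique expansion in it. Applying each functional to this expansion and invoking~\eqref{cond1}--\eqref{cond4} identifies the coefficients as $\mathcal{I}_j^{\mathrm{CR}}(p)$, $\mathcal{F}_j^{\mathrm{enr}}(p)$, $\mathcal{L}_j^{\mathrm{enr}}(p)$, and $\mathcal{J}^{\mathrm{enr}}(p)$ respectively, which is exactly $\Pi_3^{\mathrm{enr}}[p]$; thus $\Pi_3^{\mathrm{enr}}[p]=p$. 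Equivalently, one may note that $\Pi_3^{\mathrm{enr}}[f]\in\mathbb{P}_3(T)$ for every $f$, so $\Pi_3^{\mathrm{enr}}[p]$ and $p$ are two cubic polynomials agreeing under all ten unisolvent functionals (by the interpolation identities just proved), and therefore coincide.

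There is essentially no hard step here: all the substantive work — the orthogonality of the enriching polynomials, the nonsingularity of the associated matrices, and the closed-form dual basis — was already carried out in Lemmas~\ref{lem1}, \ref{lem1gennew} and Theorems~\ref{th1nnewfin12}, \ref{th2allalf1}. The only point deserving care is purely formal: checking that each functional, including the area integral $\mathcal{J}^{\mathrm{enr}}$, commutes with the finite linear combinations in~\eqref{pilinch9}, which is immediate from linearity.
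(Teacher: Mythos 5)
Your proposal is correct and follows essentially the same route as the paper, whose proof is a one-line appeal to the duality relations \eqref{cond1}--\eqref{cond4} encoded in the explicit basis expressions \eqref{mui}--\eqref{zeta}; you simply spell out the Kronecker-delta bookkeeping and the standard unisolvence argument (ten functionals, $\dim\mathbb{P}_3(T)=10$) for polynomial reproduction. No gaps.
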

\begin{proof}
The proof is a consequence of~\eqref{mui},~\eqref{etai},~\eqref{zetai} and~\eqref{zeta}.
\end{proof}

\section{Numerical experiments}\label{sec4}
In this section, we evaluate the effectiveness of the proposed enrichment strategy through several examples. We compare the accuracy of the approximation, as measured by the $L^1$-norm, produced by the standard Crouzeix--Raviart finite element with that produced by the enriched finite elements proposed here. In particular, we compare it with the quadratic polynomial enrichment of the Crouzeix–Raviart finite element $\mathcal{C}$, defined in~\eqref{P2CRfinielement}, and with the cubic polynomial enrichment of the Crouzeix--Raviart finite element $\mathcal{S}$, defined in~\eqref{tripleS}. 
 To this aim, we consider the following functions
 \begin{equation*}
     f_1(x,y)=\frac{1}{x^2+y^2+8}, \quad f_2(x,y)=e^{x+y}, \quad f_3(x,y)=\cos(2x+y), \quad f_4(x,y)=\sqrt{x^2+y^2+1},
 \end{equation*}
\begin{equation*}
    f_5(x,y)=\frac{\sqrt{64-81((x-0.5)^2+(y-0.5)^2)}}{9}-0.5,
\end{equation*}
 \begin{eqnarray*}
 f_6(x,y)&=&0.75e^{-\frac{(9(x+1)/2-2)^2}{4}-\frac{(9(y+1)/2-2)^2}{4}}
 +0.75e^{-\frac{(9(x+1)/2+1)^2}{49}-\frac{(9(y+1)/2+1)}{10}}\\
&&	+ 0.5e^{-\frac{(9(x+1)/2-7)^2}{4}-\frac{(9(y+1)/2-3)^2}{4}}-0.2e^{-(9(x+1)/2-4)^2-(9(y+1)/2-7)^2}. \notag
\end{eqnarray*}
For each experiment, we use triangulations with $N=33,306,2650,23576$ triangles
(see Figure~\ref{Fig:regulatri}).

\begin{figure}[h]
  \centering
   \includegraphics[width=0.24\textwidth]{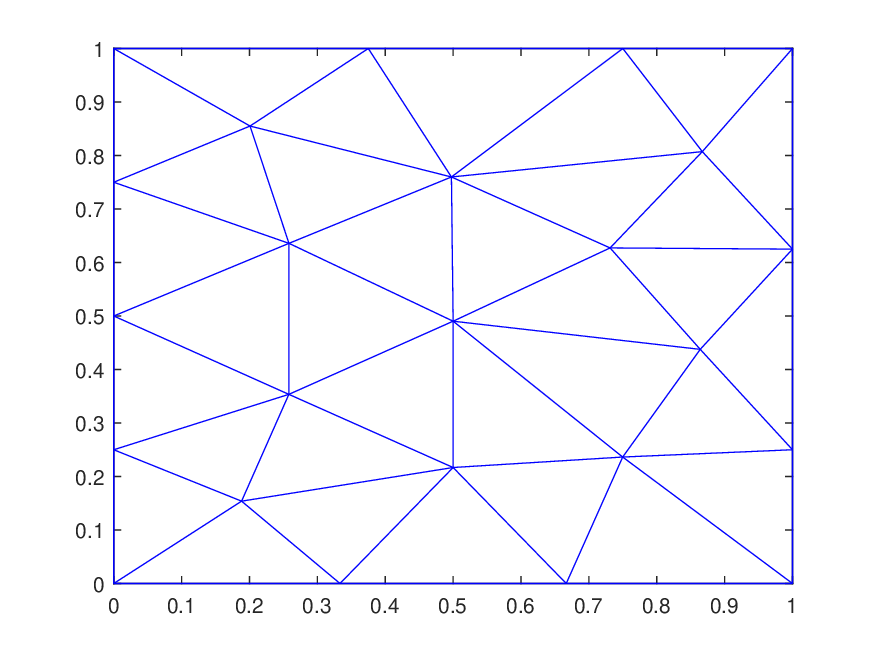} 
    \includegraphics[width=0.24\textwidth]{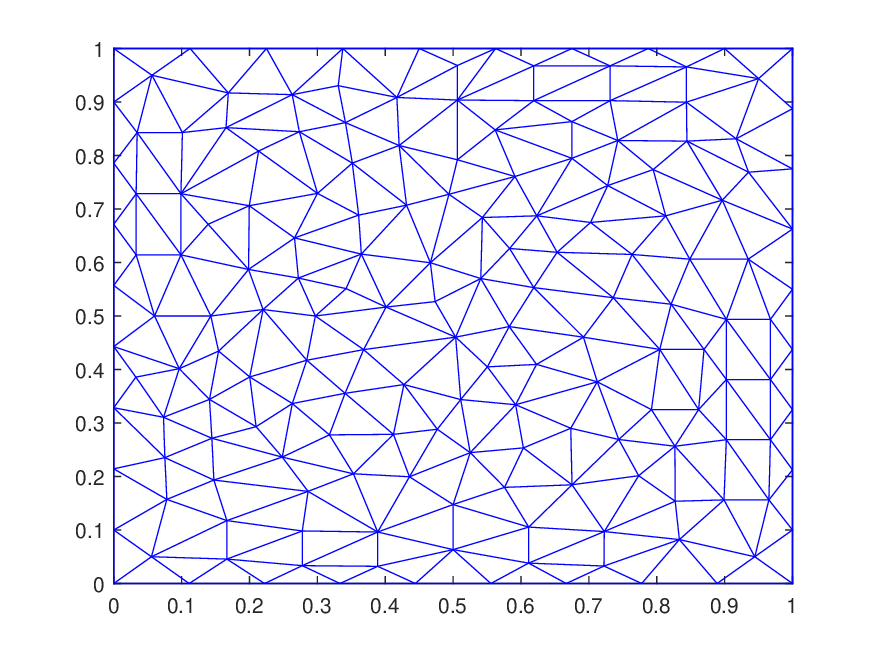} 
        \includegraphics[width=0.24\textwidth]{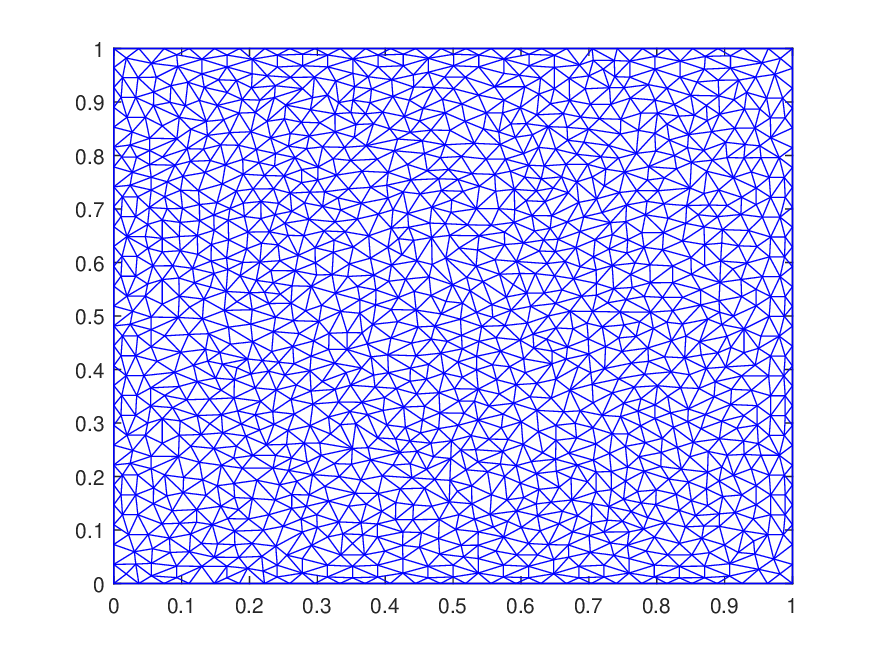}
            \includegraphics[width=0.24\textwidth]{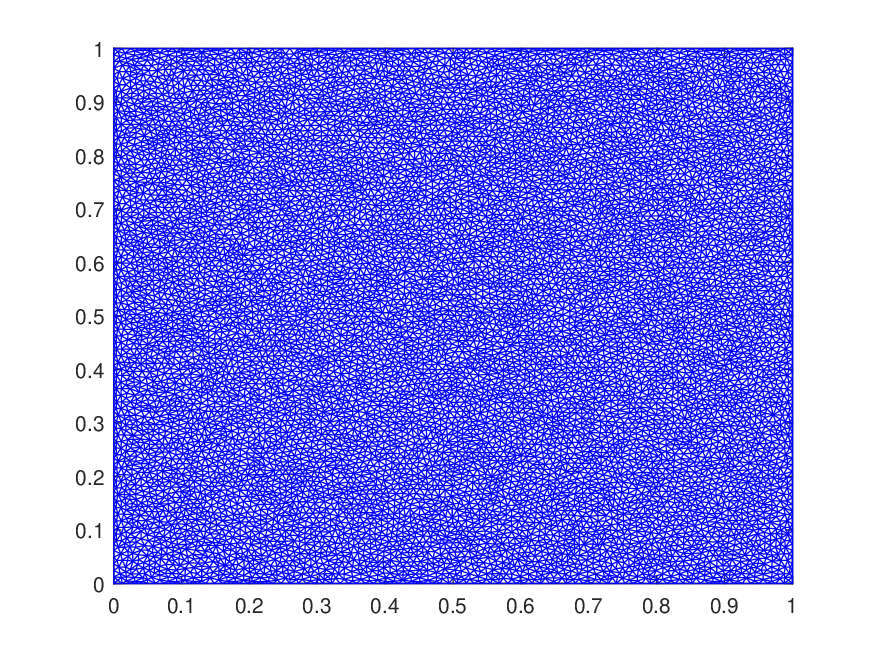}
         \caption{Delaunay triangulation of $N=33$, $N=306$, $N=2650$ and $N=23576$ tringles.}
          \label{Fig:regulatri}
\end{figure}

The numerical experiments are performed using \texttt{MatLab} software. The results are reported in Figure~\ref{fun1}-\ref{fun3}. 
All our results achieved by the enriched finite elements appear much superior to those obtained by the conventional Crouzeix-–Raviart finite element. Notably, the cubic polynomial enrichment produces a more accurate approximation than its quadratic counterpart.
This behavior becomes very satisfactory when the number of triangles in the triangulation increases.

\begin{figure}
  \centering
   \includegraphics[width=0.49\textwidth]{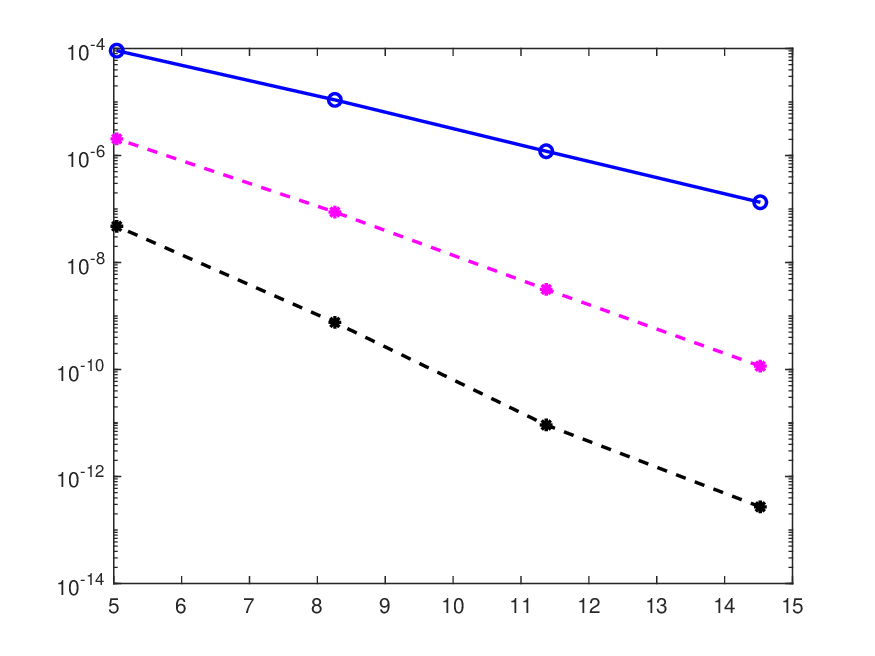} 
    \includegraphics[width=0.49\textwidth]{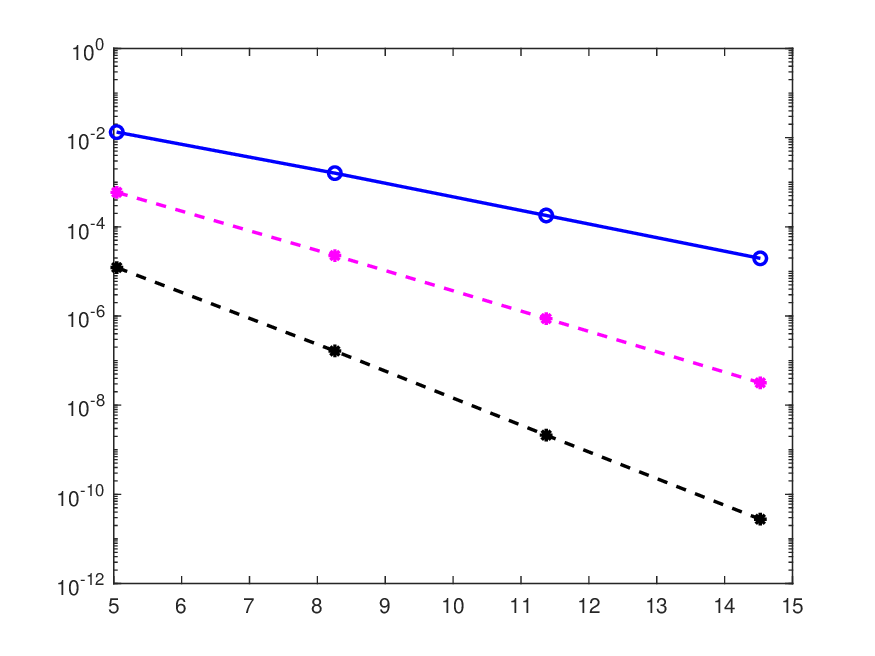} 
         \caption{Loglog plot of errors in $L^1$-norm for approximating the functions $f_1$ (left) and $f_2$ (right). The blue line represents the trend of approximation errors obtained using the standard Crouzeix--Raviart finite element. The magenta line represents the trend of errors obtained with the quadratic enriched finite element $\mathcal{C}$, while the black line represents the trend of errors obtained with the cubic enriched finite element $\mathcal{S}$ with $\alpha=\beta=1$. The comparisons are conducted using Delaunay triangulations from Figure~\ref{Fig:regulatri}.}
          \label{fun1}
\end{figure}

\begin{figure}
  \centering
   \includegraphics[width=0.49\textwidth]{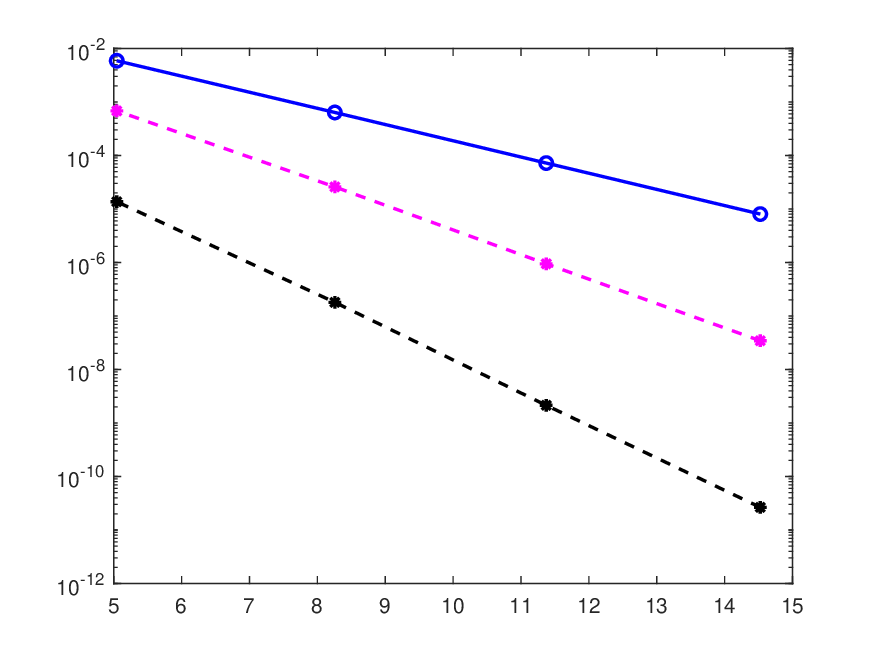} 
    \includegraphics[width=0.49\textwidth]{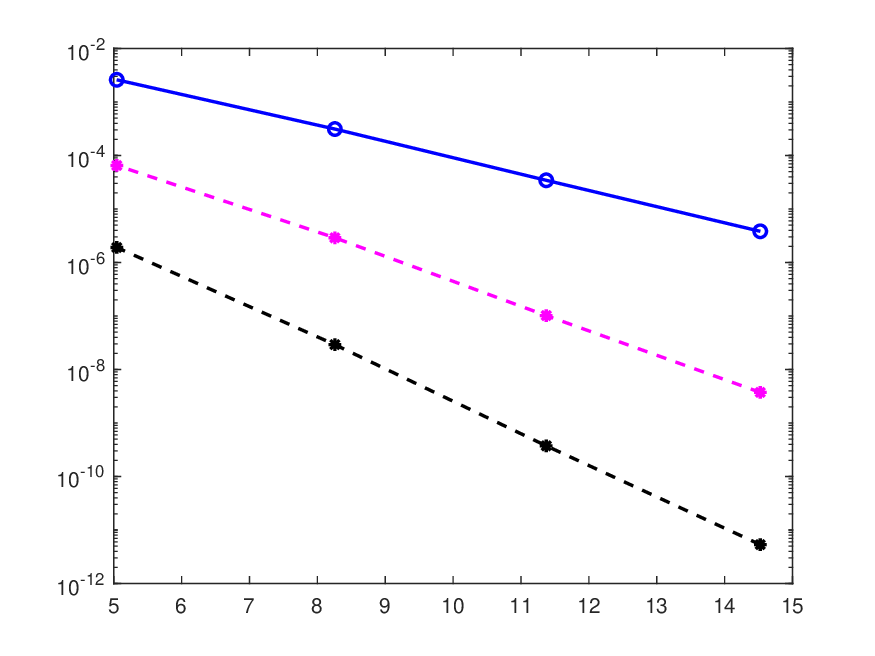} 
         \caption{Loglog plot of errors in $L^1$-norm for approximating the functions $f_3$ (left) and $f_4$ (right). The blue line represents the trend of approximation errors obtained using the standard Crouzeix--Raviart finite element. The magenta line represents the trend of errors obtained with the quadratic enriched finite element $\mathcal{C}$, while the black line represents the trend of errors obtained with the cubic enriched finite element $\mathcal{S}$ with $\alpha=\beta=1$. The comparisons are conducted using Delaunay triangulations from Figure~\ref{Fig:regulatri}.}
          \label{fun2}
\end{figure}

\begin{figure}
  \centering
   \includegraphics[width=0.49\textwidth]{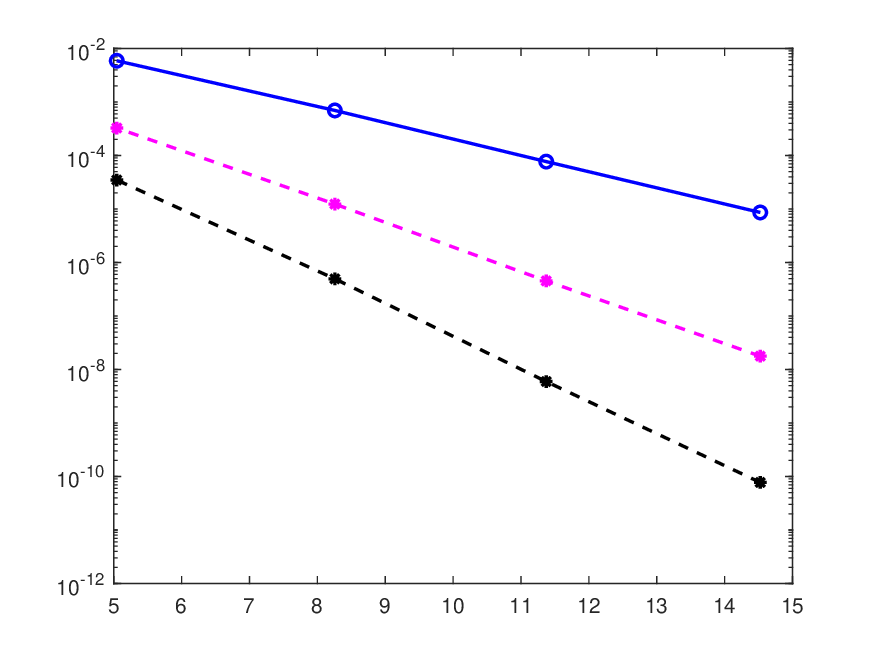} 
    \includegraphics[width=0.49\textwidth]{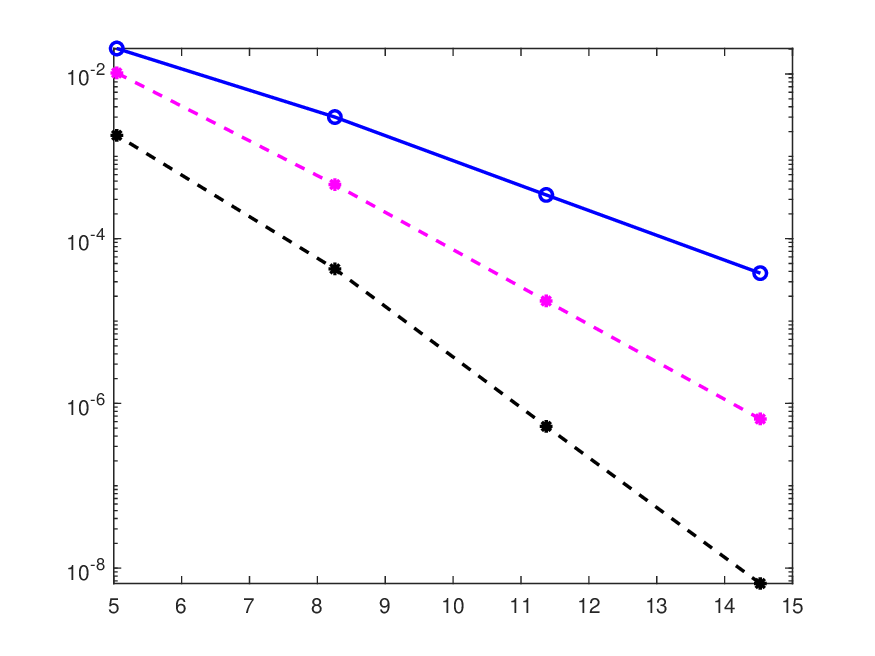} 
         \caption{Loglog plot of errors in $L^1$-norm for approximating the functions $f_5$ (left) and $f_6$ (right). The blue line represents the trend of approximation errors obtained using the standard Crouzeix--Raviart finite element. The magenta line represents the trend of errors obtained with the quadratic enriched finite element $\mathcal{C}$, while the black line represents the trend of errors obtained with the cubic enriched finite element $\mathcal{S}$ with $\alpha=\beta=1$. The comparisons are conducted using Delaunay triangulations from Figure~\ref{Fig:regulatri}.}
          \label{fun3}
\end{figure}

\section*{Acknowledgments}
 This research has been achieved as part of RITA \textquotedblleft Research
 ITalian network on Approximation'' and as part of the UMI group ``Teoria dell'Approssimazione
 e Applicazioni''. The research was supported by GNCS-INdAM 2024 projects and by the grant “Bando Professori visitatori 2022” which has allowed the visit of Prof. Allal Guessab to the Department of Mathematics and Computer Science of the University of Calabria in the spring 2022.

\bibliographystyle{elsarticle-num}
\bibliography{bibliografia.bib}

\begin{thebibliography}{10}
\expandafter\ifx\csname url\endcsname\relax
  \def\url#1{\texttt{#1}}\fi
\expandafter\ifx\csname urlprefix\endcsname\relax\def\urlprefix{URL }\fi
\expandafter\ifx\csname href\endcsname\relax
  \def\href#1#2{#2} \def\path#1{#1}\fi

\bibitem{Guessab:2022:SAB}
A.~Guessab, Sharp {A}pproximations based on {D}elaunay {T}riangulations and {V}oronoi {D}iagrams, {NSU} {P}ublishing and {P}rinting {C}enter., 2022.

\bibitem{Guessab:2016:AADM}
M.~Bachar, A.~Guessab, {A} {S}imple {N}ecessary and {S}ufficient {C}ondition for the {E}nrichment of the {C}rouzeix-{R}aviart {E}lement, Appl. Anal. Discret. Math. 10 (2016) 378--393.

\bibitem{Guessab:2016:RM}
M.~Bachar, A.~Guessab, {C}haracterization of the {E}xistence of an {E}nriched {L}inear {F}inite {E}lement {A}pproximation {U}sing {B}iorthogonal {S}ystems, Results Math. 70 (2016) 401--413.

\bibitem{DellAccio:2022:QFA}
F.~Dell'Accio, F.~Di~Tommaso, A.~Guessab, F.~Nudo, On the improvement of the triangular {S}hepard method by non conformal polynomial elements, Appl. {N}umer. {M}ath. 184 (2023) 446--460.

\bibitem{DellAccio:2022:AUE}
F.~Dell'Accio, F.~Di~Tommaso, A.~Guessab, F.~Nudo, A unified enrichment approach of the standard three-node triangular element, Appl. {N}umer. {M}ath. 187 (2023) 1--23.

\bibitem{DellAccio:2022:ESF}
F.~Dell'Accio, F.~Di~Tommaso, A.~Guessab, F.~Nudo, Enrichment strategies for the simplicial linear finite elements, Appl. {M}ath. {C}omput. 451 (2023) 128023.

\bibitem{DellAccio2023AGC}
F.~Dell'Accio, F.~Di~Tommaso, A.~Guessab, F.~Nudo, A general class of enriched methods for the simplicial linear finite elements, Appl. {M}ath. {C}omput. 456 (2023) 128149.

\bibitem{DellAccio2023nuovo}
F.~Dell'Accio, A.~Guessab, F.~Nudo, Improved methods for the enrichment and analysis of the simplicial vector-valued linear finite elements, Math. Comput. Simul. (2024).
\newblock \href {https://doi.org/https://doi.org/10.1016/j.matcom.2024.01.014} {\path{doi:https://doi.org/10.1016/j.matcom.2024.01.014}}.

\bibitem{CR1973:2022:CR}
M.~Crouzeix, P.-A. Raviart, Conforming and nonconforming finite element methods for solving the stationary {S}tokes equations {I}, Revue Fran{\c{c}}aise d'automatique informatique recherche op{\'e}rationnelle. Math{\'e}matique 7 (1973) 33--75.

\bibitem{chatzipantelidis1999finite}
P.~Chatzipantelidis, A finite volume method based on the {C}rouzeix--{R}aviart element for elliptic {PDE}'s in two dimensions, Numer. Math. 82 (1999) 409--432.

\bibitem{hansbo2003discontinuous}
P.~Hansbo, M.~G. Larson, Discontinuous {G}alerkin and the {C}rouzeix--{R}aviart element: application to elasticity, Esaim Math. Model Numer. Anal. 37 (2003) 63--72.

\bibitem{burman2005stabilized}
E.~Burman, P.~Hansbo, Stabilized {C}rouzeix--{R}aviart element for the {D}arcy-{S}tokes problem, Numer. Methods Partial Differ. Equ. 21 (2005) 986--997.

\bibitem{zhu2014analysis}
Y.~Zhu, Analysis of a multigrid preconditioner for {C}rouzeix--{R}aviart discretization of elliptic partial differential equation with jump coefficients, Numer. Linear. Algebra Appl. 21 (2014) 24--38.

\bibitem{di2015extension}
D.~Di~Pietro, S.~Lemaire, An extension of the {C}rouzeix--{R}aviart space to general meshes with application to quasi-incompressible linear elasticity and {S}tokes flow, Math. Comput. 84 (2015) 1--31.

\bibitem{ve2019quasi}
R.~Verf\"urth, P.~Zanotti, A {Q}uasi-optimal {C}rouzeix--{R}aviart discretization of the {S}tokes equations, SIAM J. Numer. Anal. 57 (2019) 1082--1099.

\bibitem{Dunkl:2014:OPO}
C.~F. Dunkl, Y.~Xu, Orthogonal polynomials of several variables, Cambridge {U}niversity {P}ress, 2014.

\bibitem{Abramowitz:1948:HOM}
M.~Abramowitz, I.~A. Stegun, Handbook of {M}athematical {F}unctions with {F}ormulas, {G}raphs, and {M}athematical {T}ables, {US} {G}overnment printing office, 1948.

\end{thebibliography}

\end{document}